\theoremstyle{theorem}
\newtheorem{theorem}{Theorem}[section]
\newtheorem{prop}[theorem]{Proposition}
\newtheorem{lemma}[theorem]{Lemma}
\newtheorem{corollary}[theorem]{Corollary}
\theoremstyle{remark}
\newtheorem{remark}[theorem]{Remark}
\theoremstyle{definition}
\newtheorem{definition}[theorem]{Definition}
\newtheorem{example}[theorem]{Example}
\numberwithin{equation}{section}
\newcommand{\E}{\mathbf{E}}
\def\R{\mathbb{R}}
\def\HH{{\mathbb H}}
\def\LL{{\mathbb L}}
\newcommand{\Prob}[1]{\mathbf P \left\{#1 \right\}}
\newcommand{\one}{\mathbf{1}}
\newcommand{\thf}{\frac{1}{2}}
\newcommand{\eps}{\varepsilon}
\newcommand{\efrak}{\mathfrak{e}}
\newcommand{\abs}[1]{\lvert #1 \rvert}
\begin{document}
\bibliographystyle{plain}

\title{Stationarity of multivariate particle
  systems}
\author{\textsc{Ilya Molchanov and Kaspar Stucki}\\
  \normalsize
  Department of Mathematical Statistics and Actuarial Science,\\
  \normalsize
  University of Bern, Sidlerstrasse 5, CH-3012 Bern, Switzerland\\
  \normalsize E-mail: ilya@stat.unibe.ch,  kaspar@stucki.org}
\maketitle

\begin{abstract}
  \noindent A particle system is a family of i.i.d. stochastic 
  processes with values translated by Poisson points. We obtain 
  conditions that ensure the stationarity in time of the particle 
  system in $\R^d$ and in some cases provide a full characterisation 
  of the stationarity property. In particular, a full characterisation
  of stationary multivariate Brown--Resnick processes is given.
  
\vspace*{3mm}

\noindent {\bf Keywords:} point process, Gaussian process, Brown--Resnick process, stationarity, convolution equation\\
\noindent {\bf AMS 2010 Subject Classification:} primary 60G55; secondary 60G10, 45E10
\end{abstract}

\maketitle

\section{Introduction}
\label{sec:Intro}

A Poisson process in the Euclidean space $\R^d$ is stationary if its
intensity measure is proportional to the Lebesgue measure. More
general Poisson processes can be defined on richer spaces, e.g. the
space of functions or sets. While in these cases often there is no
analogue of the Lebesgue measure, invariance properties of the process
can be defined with respect to transformations that account for the
intrinsic structure of the relevant phase space, see
\cite[Ch.~3]{res87}.

One of most spectacular examples of this situation is due to Kabluchko
\cite{kab10}, who considered the following situation. Let $\Pi$ be a
Poisson point process on $\R$ and let $\{\xi_i,i\geq1\}$ be
i.i.d. copies of a real-valued stochastic process $\xi(t)$,
$t\in\R^m$. Define the family of functions $x_i+\xi_i(t)$, $t\in\R^m$,
for $x_i\in\Pi$, which (under appropriate integrability conditions on
the intensity of $\Pi$) becomes a point process on the space of
functions on $\R^m$.  For any $t\in\R^m$,
\begin{math}
  N(t)=\{x_i+\xi_i(t):\; i\geq1\}
\end{math}
is the Poisson point process on $\R$. Sometimes, the point process
$N(t)$ formed by the values of the translated function is stationary
in time even if $\xi$ is not stationary.  It is important to
distinguish this concept from the stationarity on $\R$, where the
points lie.

Kabluchko \cite{kab10} characterised the cases when a real-valued
Gaussian process $\xi$ gives rise to a stationary point system $N(t)$
called a stationary \emph{Gaussian system} assuming that the intensity
measure $\Lambda$ of $\Pi$ satisfies $\int_\R e^{-\eps
  x^2}\Lambda(dx)<\infty$ for all $\eps>0$. All stationary Gaussian
systems are given by the following three classes.
\begin{itemize}
\item[(i)] $\Lambda$ is an arbitrary measure on $\R$ and $\xi$ is a
  stationary Gaussian process.
\item[(ii)] $\Lambda$ is proportional to the Lebesgue measure on $\R$
  and $\xi(t)=W(t)+b(t)+c$, where $W$ is a centred Gaussian process
  with stationary increments, $b$ is an \emph{additive function},
  i.e. $b(t+s)=b(t)+b(s)$ for all $t$ and $s$, and $c\in\R$ is a
  constant.
\item[(iii)] The density of $\Lambda$ is proportional to $e^{-\lambda
    x}$, $x\in\R$, with $\lambda\neq0$, and $\xi(t)=W(t) -\lambda
  \sigma^2(t)/2+c$, where $W$ is a centred Gaussian process with
  stationary increments and variance $\sigma^2(t)$, and $c\in\R$ is a
  constant.
\end{itemize}

The aim of this paper is to provide a partial generalisation of the
above result for the case when $\xi$ takes values in a
higher-dimensional Euclidean space, which is also mentioned in
\cite{kab10} as an interesting open problem. In some
cases, notably for multivariate Brown--Resnick processes, our
characterisation is complete. The current work also
yields alternative proofs of some results from \cite{kab10}.

\section{Multivariate particle systems}
\label{sec:mult-point-syst}

Let $\{\xi_i,i\geq1\}$ be i.i.d. copies of a $\R^d$-valued stochastic
process $\xi(t)$, $t\in\R$. All subsequent results can be easily
generalised and remain valid for processes $\xi$ with argument $t$
from a higher-dimensional Euclidean space.

Furthermore, let $\Pi=\{x_i, \; i\geq1\}$ be a Poisson point process
in $\R^d$ independent of the $\xi_1,\xi_2,\ldots$. We call the process
\begin{displaymath}
  N(t)=\{x_i+\xi_i(t), i \geq 1\}\,,\qquad t\in \R\,,
\end{displaymath}
a \emph{particle system}, so that a particle system is a stochastic
process with values in the space of point configurations (or counting
measures). Since the distribution of $\Pi$ is completely determined by
its intensity measure $\Lambda$, we say that the particle system
$(\Lambda,\xi)$ is generated by measure $\Lambda$ and the process $\xi$. If the process $\xi$ is Gaussian, we call $(\Lambda,\xi)$ a \emph{Gaussian system}.

By the finite-dimensional distributions of $N$ we mean the
distribution of the point process in $\R^{dn}$ given by
\begin{displaymath}
  N(t_1,\dots,t_n)
  =\{(x_i+\xi_i(t_1),\dots,x_i+\xi_i(t_n)), i\geq1\}\,,\quad
  t_1,\dots,t_n\in\R\,.
\end{displaymath}

Denote by $P_{t_1,\dots,t_n}$ the finite-dimensional distributions of
$\xi$, in particular $P_t$ is the distribution of $\xi(t)$.  From now
on we always assume that the convolution $\Lambda*P_t$ is a locally
finite measure for all $t \in \R$.  The following result is easy to
obtain using the probability generating functional of the Poisson
process, see \cite[Ex.~9.4(c)]{dal:ver08}.

\begin{prop}
  If $\Lambda*P_t$ is a locally finite measure for all $t \in \R$,
  then, for all $t_1,\dots,t_n\in\R$, $N(t_1,\dots,t_n)$ is a Poisson
  point process in $\R^{dn}$ with locally finite intensity measure
  \begin{equation}
    \label{eq:Lambda}
    \Lambda_{t_1,\dots,t_n}(A)
    =\int_{\R^d} P_{t_1,\dots,t_n}(A-x)\Lambda(dx)
  \end{equation}
  for all Borel $A \subset\R^{dn}$, where $A-x$ is $A$ translated by
  $(x,\dots,x)$ composed of $n$ copies of $x\in\R^d$.
\end{prop}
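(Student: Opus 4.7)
The plan is to identify $N(t_1,\dots,t_n)$ as the image under the addition map of an independently marked Poisson process, or equivalently, to compute its probability generating functional directly. I would choose the second route since the reference \cite[Ex.~9.4(c)]{dal:ver08} suggests the authors have this in mind, and it makes the intensity formula fall out automatically.

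First, fix a measurable function $u:\R^{dn}\to[0,1]$ with $1-u$ having bounded support, and write out the generating functional
\begin{displaymath}
  G_N[u]=\E\prod_{i\geq1}u\bigl(x_i+\xi_i(t_1),\dots,x_i+\xi_i(t_n)\bigr).
\end{displaymath}
Conditioning on $\Pi$ and using the i.i.d.\ assumption on $\{\xi_i\}$ and their independence from $\Pi$, this equals $\E\prod_{i\geq1}h(x_i)$, where
\begin{displaymath}
  h(x)=\int_{\R^{dn}}u(x+y_1,\dots,x+y_n)\,P_{t_1,\dots,t_n}(dy_1,\dots,dy_n).
\end{displaymath}
Applying the standard formula for the p.g.fl.\ of the Poisson process $\Pi$ with intensity $\Lambda$, and then using Fubini to swap the integrals over $\Lambda$ and $P_{t_1,\dots,t_n}$, yields
\begin{displaymath}
  G_N[u]=\exp\!\left(-\int_{\R^{dn}}(1-u(z))\,\Lambda_{t_1,\dots,t_n}(dz)\right),
\end{displaymath}
with $\Lambda_{t_1,\dots,t_n}$ as in \eqref{eq:Lambda}. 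This is precisely the p.g.fl.\ of a Poisson point process on $\R^{dn}$ with intensity $\Lambda_{t_1,\dots,t_n}$, proving the claim.

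The one technical point to verify is that $\Lambda_{t_1,\dots,t_n}$ is indeed locally finite; without this, the p.g.fl.\ identity cannot be invoked. Given a bounded Borel set $A\subset\R^{dn}$, pick a bounded $B\subset\R^d$ such that $A\subset B\times\R^{d(n-1)}$. Then for every $x\in\R^d$ one has $P_{t_1,\dots,t_n}(A-x)\le P_{t_1}(B-x)$, so
\begin{displaymath}
  \Lambda_{t_1,\dots,t_n}(A)\le\int_{\R^d}P_{t_1}(B-x)\,\Lambda(dx)=(\Lambda*P_{t_1})(B),
\end{displaymath}
which is finite by the standing hypothesis. I do not expect any serious obstacle: the Fubini step uses only nonnegativity of $1-u$ and $P_{t_1,\dots,t_n}$, and the local finiteness estimate above is the sharpest point, but it follows directly from the assumption on $\Lambda*P_t$.
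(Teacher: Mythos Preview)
Your argument is correct and follows exactly the route the paper indicates: the paper does not write out a proof but merely cites \cite[Ex.~9.4(c)]{dal:ver08} for the probability generating functional computation, which is precisely what you carry out. Your added verification that $\Lambda_{t_1,\dots,t_n}$ is locally finite via the projection bound $P_{t_1,\dots,t_n}(A-x)\le P_{t_1}(B-x)$ is the natural way to use the standing hypothesis, and it is needed to ensure that $\int(1-h)\,d\Lambda<\infty$ so that the p.g.fl.\ formula for $\Pi$ applies; you may wish to state that estimate before invoking the p.g.fl., but this is only a matter of presentation.
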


The main question addressed in this paper is to characterise all pairs
$(\Lambda,\xi)$, such that the corresponding particle system $N$ is
stationary. By stationarity we mean that for all $s,t_1,\dots,t_n\in
\R$ the distributions of $N(t_1,\dots,t_n)$ and $N(t_1+s,\dots,t_n+s)$
coincide.  Since the distribution of a Poisson point process is
determined by its intensity measure, we immediately obtain the
following result.

\begin{prop}
  \label{prop:stat}
  The particle system generated by $\Lambda$ and $\xi$ is
  stationary if and only if
  \begin{equation}
    \label{eq:stat}
    \Lambda_{t_1,\dots,t_n}=\Lambda_{t_1+s,\dots,t_n+s}
  \end{equation}
  for all $s,t_1,\dots,t_n \in \R$.
\end{prop}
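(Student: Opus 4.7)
The proof is immediate from the preceding proposition together with the fact that a Poisson point process is characterised by its intensity measure.

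Applying the preceding proposition to both $(t_1,\dots,t_n)$ and the shifted tuple $(t_1+s,\dots,t_n+s)$, one sees that $N(t_1,\dots,t_n)$ and $N(t_1+s,\dots,t_n+s)$ are Poisson point processes on $\R^{dn}$ with intensity measures $\Lambda_{t_1,\dots,t_n}$ and $\Lambda_{t_1+s,\dots,t_n+s}$, respectively. Since the distribution of a Poisson point process is uniquely determined by its intensity measure, these two point processes coincide in distribution if and only if the two intensity measures coincide.

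To finish, I note that the stationarity of $N$ as a process with values in the space of counting measures on $\R^d$ amounts, by definition, to the requirement that the joint law of $(N(t_1),\dots,N(t_n))$ agrees with that of $(N(t_1+s),\dots,N(t_n+s))$ for every $n$, every shift $s$, and every $t_1,\dots,t_n\in\R$. These joint laws are precisely the distributions of the point processes $N(t_1,\dots,t_n)$ and $N(t_1+s,\dots,t_n+s)$ on $\R^{dn}$ introduced above, so the previous step yields the claimed equivalence \eqref{eq:stat} in both directions.

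There is no substantive obstacle: the proposition essentially repackages the uniqueness of the Poisson law, the only point of care being the tautological identification of the finite-dimensional distributions of the counting-measure-valued process $N$ with the laws of the Poisson point processes $N(t_1,\dots,t_n)$ on $\R^{dn}$.
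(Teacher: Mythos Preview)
Your proof is correct and follows exactly the paper's approach: the paper states, immediately before the proposition, that ``since the distribution of a Poisson point process is determined by its intensity measure, we immediately obtain the following result,'' and gives no further argument. Your write-up simply spells this out. One minor remark: in the paper, stationarity is \emph{defined} as equality in law of $N(t_1,\dots,t_n)$ and $N(t_1+s,\dots,t_n+s)$, so the identification you discuss in your final paragraph is the definition itself rather than an additional step to be justified.
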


\section{Convolution equations}
\label{sec:conv-equat}

The stationarity condition (\ref{eq:stat}) is in fact a system of
convolution equations of the form
\begin{equation}
  \label{eq:lam}
  P_{t_1,\dots,t_n}*\tilde{\Lambda}
  =P_{t_1+s,\dots,t_n+s}*\tilde{\Lambda}\,,
\end{equation}
where $\tilde{\Lambda}$ is the measure obtained by uplifting $\Lambda$
to the diagonal in $\R^{dn}$. In general notation, these equations are
of the type
\begin{equation}
  \label{eq:Deny2}
  \sigma_1*\mu=\sigma_2*\mu \,,
\end{equation}
where $\sigma_1$ and $\sigma_2$ are probability measures and $\mu$ is
an unknown locally finite measure on $\R^d$.  If $\sigma_2$ can be
decomposed as $\sigma_2=\sigma_1*\sigma$ (or if
$\sigma_1=\sigma_2*\sigma$), then (\ref{eq:Deny2}) simplifies to
\begin{equation}
  \label{eq:Deny1}
  \mu=\mu*\sigma
\end{equation}
for another measure $\mu$.  This convolution equation was solved by
D\'eny \cite{den59}. Namely, if the support of $\sigma$ is the whole
$\R^d$, then all solutions of \eqref{eq:Deny1} are mixtures of
exponential measures. i.e.
\begin{equation}
  \label{eq:mixture}
  \mu=\int_E \efrak_\lambda Q(d\lambda) \,,
\end{equation}
where $\efrak_\lambda$ is the measure on $\R^d$ with density
$e^{-\langle \lambda,x\rangle}$, $x\in\R^d$, and $Q$ is a measure on
the set $E=E_\sigma$ with
\begin{equation}
  \label{eq:Esigma}
  E_\sigma=\Big\{\lambda \in \R^d:
  \int_{\R^d} e^{\langle \lambda,x\rangle}\sigma(dx)=1\Big\}\,.
\end{equation}
In particular, if $\xi$ is a real-valued Gaussian process with
non-constant variance $\sigma^2(t)$, $t\in\R$, then there exist
$t_1,t_2\in\R$ such that $\sigma^2(t_2)>\sigma^2(t_1)$, so that the
first convolution equation $P_{t_1}*\Lambda=P_{t_2}*\Lambda$ can be
reduced to the D\'eny convolution equation \eqref{eq:Deny1} for
$\sigma$ being the normal law with the variance
$\sigma^2(t_2)-\sigma^2(t_1)$.  Hence $\Lambda*P_t$ is a mixture of
exponential measures, which is the crucial argument in the
characterisation of stationary Gaussian systems in \cite{kab10}.

In the multivariate case it is usually not possible to reduce the two-sided convolution equation to a one-sided equation,
since the difference of two covariance matrices may be neither positive nor negative definite. In the
spirit of \eqref{eq:Esigma}, define
\begin{equation}
  \label{eq:E}
  E_{\sigma_1\sigma_2}=\Big\{\lambda \in \R^d:
  \int_{\R^d}e^{\langle \lambda,x\rangle}\sigma_1(dx)
  =\int_{\R^d}e^{\langle \lambda,x\rangle}\sigma_2(dx)\Big\}\,.
\end{equation}
While each measure $\mu$ given by (\ref{eq:mixture}) with
$E=E_{\sigma_1\sigma_2}$ satisfies the convolution equation
(\ref{eq:Deny2}), there exist solutions of \eqref{eq:Deny2} not in the
form (\ref{eq:mixture}).

\begin{example}
  \label{ex:counterexample}
  Let $\sigma_1$ and $\sigma_2$ be bivariate centred normal
  distributions with covariance matrices 
  \begin{displaymath}
    \Sigma_1=\begin{pmatrix}1+c_1^2 & 0\\ 0 & 1 \end{pmatrix}
    \quad\text{and}\quad
    \Sigma_2=\begin{pmatrix}1 & 0\\ 0 & 1+c_2^2 \end{pmatrix} 
  \end{displaymath}
  for some constants $c_1,c_2>0$. Let $g:\R\to\R$ be a function such
  that $\int_\R g(x+y)e^{-y^2/2}dy$ is finite for all $x\in\R$. Then
  measure $\mu_g$ with the density $g(c_1^{-1}x_1+c_2^{-1}x_2)$
  satisfies (\ref{eq:Deny2}).  Indeed, substitution $z=c_1^{-1}c_2y$
  yields that
  \begin{displaymath}
    \frac{1}{c_1}\int_\R g\Big(\frac{x_1}{c_1}+\frac{x_2}{c_2}
    -\frac{y}{c_1}\Big)e^{-\frac{1}{2}\left(\frac{y}{c_1}\right)^2}dy
    =\frac{1}{c_2}\int_\R g\Big(\frac{x_1}{c_1}+\frac{x_2}{c_2}
    -\frac{z}{c_2}\Big)e^{-\frac{1}{2}\left(\frac{z}{c_2}\right)^2}dz\,.
  \end{displaymath}
  It remains to note that the two sides of this equality are up to the
  same constant the densities of the convolution of $\mu_g$ and the
  centred normal distributions with covariance matrices
  \begin{displaymath}
    \begin{pmatrix} c_1^2 & 0\\ 0 & 0 \end{pmatrix}
    \quad\text{and}\quad 
    \begin{pmatrix} 0 & 0\\ 0 & c_2^2 \end{pmatrix} 
  \end{displaymath}
  Note that $E_{\sigma_1\sigma_2}=\{(\lambda_1,\lambda_2):\;
  c_1|\lambda_1|=c_2|\lambda_2|\}$.  For instance, if $g(x)=x^2$ then
  there exists no measure $Q$ such that 
  $\mu_g=\int_{E_{\sigma_1\sigma_2}} \efrak_\lambda Q(d\lambda)$.
\end{example}

Unfortunately there is no general result describing solutions of
(\ref{eq:Deny2}). The two-sided convolution equation can be written as
$\mu*\nu=0$ for a signed measure $\nu$ with finite total variation. If
$\nu$ has bounded support, then the density of $\mu$ solving this
equation is called a mean periodic function. Typical examples of mean
periodic functions are exponential polynomials, i.e. sums of products
of polynomials and exponential functions. While exponential
polynomials are dense in the family of mean periodic functions on the
line \cite{ehr55}, this is unknown for higher-dimensional spaces. The
situation with $\nu$ having unbounded support (e.g. corresponding to
the difference of two Gaussian measures on $\R^d$) is even less
explored.

\section{Multivariate stationarity}
\label{sec:Mstat}

In this section we characterise the stationarity conditions for some
(but still rather general) families of intensity measures $\Lambda$.

\subsection{Exponential measures.}

Consider candidates for the solutions of \eqref{eq:lam} of the form
$\Lambda=\efrak_\lambda$ for $\lambda\in\R^d$. It is easy to see that
necessarily $\lambda\in E_{P_tP_s}$ (see \eqref{eq:E}) for any
$t,s\in\R$. The convolution $\efrak_\lambda*P_t$ is locally finite if and
only if
\begin{equation}
  \label{eq:intexp}
  \E e^{\langle \lambda,\xi(t)\rangle} < \infty
  \quad \text{for all } t \in \R \,.
\end{equation}
Then the characteristic function with a complex argument in its first
coordinate
\begin{multline*}
  \varphi_{t_1,\dots,t_n}(u_1-\imath\lambda,u_2,\dots,u_n)\\
  =\E \exp\{\imath(\langle (u_1-\imath\lambda),\xi(t_1)\rangle+
  \langle u_2, \xi(t_2)\rangle +\dots
  + \langle u_n, \xi(t_n)\rangle)\} 
\end{multline*}  
exists for all $u_1,\dots,u_n \in \R^d$, where $\imath$ is the
imaginary unit.

\begin{theorem}
  \label{thm:Laplace1e}
  Assume that (\ref{eq:intexp}) holds. The particle system generated
  by $\efrak_\lambda$ and $\xi$ is stationary if and only if
  \begin{equation}
    \label{eq:Fourier}
    \varphi_{t_1,\dots,t_n}(u_1-\imath\lambda,u_2,\dots,u_n)
    =\varphi_{t_1+s,\dots,t_n+s}(u_1-\imath\lambda,u_2,\dots,u_n)
  \end{equation}
  for all $n\geq1$, $s,t_1,\dots,t_n \in \R$ and
  $u_1,\dots,u_n\in\R^d$ satisfying $\sum_{i=1}^n u_i = 0$.
\end{theorem}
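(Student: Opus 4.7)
The plan is to apply Proposition~\ref{prop:stat} and reduce the equality of the (locally finite but infinite-mass) intensity measures $\Lambda_{t_1,\dots,t_n}$ and $\Lambda_{t_1+s,\dots,t_n+s}$ on $\R^{dn}$ to the equality of \emph{finite} tilted measures on $\R^{d(n-1)}$, whose characteristic functions turn out to be exactly the quantities appearing in \eqref{eq:Fourier}.

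First, starting from \eqref{eq:Lambda} with $\Lambda=\efrak_\lambda$, I would write
\begin{displaymath}
\Lambda_{t_1,\dots,t_n}(A)=\int_{\R^d}\E\one_A\bigl(\xi(t_1)+x,\dots,\xi(t_n)+x\bigr)e^{-\langle\lambda,x\rangle}\,dx,
\end{displaymath}
substitute $x=y-\xi(t_1)$ inside the expectation (permissible by translation invariance of Lebesgue measure), and, using \eqref{eq:intexp} to secure integrability, apply Fubini. Setting $\eta_k=\xi(t_k)-\xi(t_1)$, this yields the factorisation
\begin{displaymath}
\Lambda_{t_1,\dots,t_n}=\Psi_{*}\bigl(\efrak_\lambda\otimes\tilde P_{t_1,\dots,t_n}\bigr),
\end{displaymath}
where $\Psi(y,z_2,\dots,z_n)=(y,y+z_2,\dots,y+z_n)$ is a bijection of $\R^{dn}$ with unit Jacobian, and
\begin{displaymath}
\tilde P_{t_1,\dots,t_n}(B)=\E\bigl[e^{\langle\lambda,\xi(t_1)\rangle}\one_B(\eta_2,\dots,\eta_n)\bigr]
\end{displaymath}
is a \emph{finite} measure on $\R^{d(n-1)}$ of total mass $\E e^{\langle\lambda,\xi(t_1)\rangle}<\infty$.

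Second, since $\Psi$ is a bijection and $\efrak_\lambda$ is $\sigma$-finite, this factorisation recovers $\tilde P_{t_1,\dots,t_n}$ uniquely from $\Lambda_{t_1,\dots,t_n}$, so by Proposition~\ref{prop:stat} the stationarity of the particle system is equivalent to $\tilde P_{t_1,\dots,t_n}=\tilde P_{t_1+s,\dots,t_n+s}$ for all admissible indices. A direct computation then gives, for $(u_2,\dots,u_n)\in\R^{d(n-1)}$ and $u_1:=-\sum_{k\geq 2}u_k$,
\begin{displaymath}
\int e^{\imath\sum_{k\geq 2}\langle u_k,z_k\rangle}\tilde P_{t_1,\dots,t_n}(dz)=\varphi_{t_1,\dots,t_n}(u_1-\imath\lambda,u_2,\dots,u_n),
\end{displaymath}
after collecting the tilt and the $\xi(t_1)$-terms. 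By uniqueness of the characteristic function for finite measures, the equality of the tilted measures on both sides is equivalent to \eqref{eq:Fourier} holding for all $(u_2,\dots,u_n)\in\R^{d(n-1)}$, and as these vary freely the corresponding $(u_1,\dots,u_n)$ trace out exactly all tuples summing to zero, as in the theorem.

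The main subtlety will be the passage from the infinite-mass measure $\Lambda_{t_1,\dots,t_n}$ to the finite tilted measure $\tilde P_{t_1,\dots,t_n}$: both the legitimacy of Fubini in the change of variables and the uniqueness of $\tilde P_{t_1,\dots,t_n}$ in the factorisation hinge on \eqref{eq:intexp}. It is also the exponential factor $\efrak_\lambda$ that forces the constraint $\sum u_i=0$ in \eqref{eq:Fourier}, as this is precisely the condition under which the non-integrable $y$-direction drops out when computing Fourier transforms.
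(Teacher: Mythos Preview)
Your proof is correct and follows essentially the same approach as the paper: disintegrate $\Lambda_{t_1,\dots,t_n}$ along the diagonal translations to obtain a finite tilted measure (the paper's $\mu_{t_1,\dots,t_n}$ on $\{x_1=0\}\subset\R^{dn}$ is your $\tilde P_{t_1,\dots,t_n}$ on $\R^{d(n-1)}$ up to identification), argue uniqueness of this disintegration, and identify its Fourier transform with the expression in \eqref{eq:Fourier}. The only cosmetic difference is that the paper phrases the decomposition as $\int\mu(A-z)e^{-\langle\lambda,z\rangle}dz$ and invokes a disintegration result from Kallenberg for uniqueness, whereas you use the pushforward $\Psi_*(\efrak_\lambda\otimes\tilde P)$ and read off uniqueness directly from bijectivity of $\Psi$ and $\sigma$-finiteness of $\efrak_\lambda$.
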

\begin{proof}
  The proof follows the idea of \cite[Prop.~6]{kab:sch:haan09}.
  Let $A$ be a bounded Borel set in $\R^{dn}$. Then
  \begin{align}
    \Lambda_{t_1,\dots,t_n}(A)
    &= \E \int_{\R^d}
    \mathbf{1}_{(\xi(t_1)+x,\dots,\xi(t_n)+x) \in A}\;
    e^{-\langle\lambda,x\rangle}dx\nonumber \\
    &= \int_{\R^d} \mu_{t_1,\dots,t_n}(A-z)
    e^{-\langle\lambda,z\rangle}dz\,,
    \label{eq:mu-disint}
  \end{align}
  where $\mu$ is a measure on $\R^{dn}$ given by
  \begin{equation}
    \label{eq:mut}
    \mu_{t_1,\dots,t_n}(A)
    = \E \Big[\mathbf{1}_{(0,\xi(t_2)-\xi(t_1),\dots,\xi(t_n)-\xi(t_1))
    \in A}\; e^{\langle\lambda,\xi(t_1)\rangle}\Big]\,.
  \end{equation}
  Since $\mu_{t_1,\dots,t_n}$ is supported by the subspace
  $\{(x_1,\dots,x_n)\in\R^{dn}:\; x_1=0\}$, the decomposition
  $\Lambda_{t_1,\dots,t_n}(A)=\int\mu_{t_1,\dots,t_n}(A-z)e^{-\langle\lambda,
    z\rangle}dz$ is unique, e.g. see
  \cite[Th.~15.3.3]{kalle83}. Finally, note that the Fourier transform
  of $\mu_{t_1,\dots,t_n}$ is given by
  \begin{displaymath}
    \hat{\mu}_{t_1,\dots,t_n}(u_1,\dots,u_n)
    = \varphi_{t_1,\dots,t_n}\left(-\imath\lambda
      -\sum_{i=2}^n u_i,u_2,\dots,u_n \right) \,.
  \end{displaymath}
\end{proof}

A similar proof with the Laplace transform instead of the Fourier
transform yields the following result.

\begin{prop}
  \label{cor:FL}
  Assume that the Laplace transform 
  \begin{equation}
    \psi_{t_1,\dots,t_n}(u_1,\dots,u_n)
    =\E \exp\{\langle u_1,\xi(t_1)\rangle+\dots
    + \langle u_n, \xi(t_n)\rangle\} 
  \end{equation}  
  exists for all $u_1,\dots,u_n \in \R^d$ such that $\sum_{i=1}^n
  u_i=\lambda$. Then the particle system $(\efrak_\lambda,\xi)$ is
  stationary if and only if
  \begin{math}
    \psi_{t_1,\dots,t_n}(u_1,\dots,u_n)
    =\psi_{t_1+s,\dots,t_n+s}(u_1,\dots,u_n)
  \end{math}
  for all $n\geq1$, $s,t_1,\dots,t_n\in\R$ and $u_1,\dots,u_n\in\R^d$
  satisfying $\sum_{i=1}^n u_i=\lambda$.
\end{prop}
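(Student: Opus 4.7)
The plan is to mirror the proof of Theorem \ref{thm:Laplace1e}, substituting the Laplace transform for the Fourier transform at the final step. The first step carries over verbatim: since $\efrak_\lambda$ has density $e^{-\langle\lambda,z\rangle}$, I rewrite
\begin{displaymath}
\Lambda_{t_1,\ldots,t_n}(A) = \int_{\R^d} \mu_{t_1,\ldots,t_n}(A-z)\, e^{-\langle\lambda,z\rangle}\, dz,
\end{displaymath}
with $\mu_{t_1,\ldots,t_n}$ defined as in \eqref{eq:mut}. Because $\mu_{t_1,\ldots,t_n}$ is supported on the subspace $\{(x_1,\ldots,x_n)\in\R^{dn}:x_1=0\}$, the decomposition is unique by \cite[Th.~15.3.3]{kalle83}. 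Hence the stationarity condition \eqref{eq:stat} is equivalent to the family of identities $\mu_{t_1,\ldots,t_n}=\mu_{t_1+s,\ldots,t_n+s}$ for all admissible $s,t_1,\ldots,t_n$.

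The second step is to compute the Laplace transform of $\mu_{t_1,\ldots,t_n}$ directly from \eqref{eq:mut}. A short calculation shows that this transform, evaluated at $(u_1,\ldots,u_n)$, does not actually depend on $u_1$ (since $\mu$ lives on $\{x_1=0\}$) and coincides with $\psi_{t_1,\ldots,t_n}(u_1,u_2,\ldots,u_n)$ precisely when $u_1=\lambda-\sum_{i=2}^n u_i$, i.e.\ on the affine hyperplane $\sum_i u_i=\lambda$. The standing hypothesis is tailored exactly to guarantee that this Laplace transform is finite for every $(u_2,\ldots,u_n)\in\R^{d(n-1)}$.

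Applying uniqueness of the Laplace transform on $\R^{d(n-1)}$ then completes the argument: $\mu_{t_1,\ldots,t_n}=\mu_{t_1+s,\ldots,t_n+s}$ if and only if the asserted equality of $\psi_{t_1,\ldots,t_n}$ and $\psi_{t_1+s,\ldots,t_n+s}$ holds throughout $\{(u_1,\ldots,u_n):\sum_i u_i=\lambda\}$. Chaining this with Proposition \ref{prop:stat} and the uniqueness of the disintegration above yields the stated characterisation.

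The main obstacle -- and the reason the statement requires a separate integrability hypothesis -- is producing a usable Laplace-transform uniqueness theorem: unlike the Fourier transform, the Laplace transform need not exist, and uniqueness typically requires finiteness on a set with non-empty interior. The hypothesis that $\psi$ is finite on the entire affine hyperplane $\sum_i u_i=\lambda$ is exactly the condition which, under the identification $u_1=\lambda-\sum_{i\geq 2}u_i$, delivers finiteness of the Laplace transform of $\mu_{t_1,\ldots,t_n}$ on all of $\R^{d(n-1)}$; everything else is a mechanical transcription of the Fourier proof.
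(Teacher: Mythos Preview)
Your proposal is correct and follows exactly the approach the paper intends: the paper's own proof consists of the single remark that a similar proof with the Laplace transform in place of the Fourier transform yields the result, and you have carried out precisely that substitution, including the correct identification of the Laplace transform of $\mu_{t_1,\dots,t_n}$ with $\psi_{t_1,\dots,t_n}$ restricted to the hyperplane $\sum_i u_i=\lambda$ and the observation that the integrability hypothesis is what makes the Laplace-transform uniqueness argument go through.
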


For Gaussian processes we can give a more precise statement.  Denote
by $\Sigma(t_1,t_2)$ the covariance matrix of $\xi(t_1)$ and
$\xi(t_2)$, in particular $\Sigma(t,t)$ is the covariance matrix of
$\xi(t)$. It is important to note that, unlike in the univariate case,
$\Sigma(t_1,t_2)$ may differ from $\Sigma(t_2,t_1)$, namely
$\Sigma(t_2,t_1)=\Sigma(t_1,t_2)^\top$.

\begin{example}
  \label{ex:wp-shifted}
  Let $\xi^1(t)=W(t)$ and let $\xi^2(t)=W(t+h)$ for some fixed $h>0$,
  where $W$ is the Wiener process. Then $\E\xi^1(t_1)\xi^2(t_2)$ is
  not necessarily equal to $\E\xi^1(t_2)\xi^2(t_1)$, so that
  $\Sigma(t_1,t_2)$ is not necessarily symmetric. 
\end{example}

The covariance matrix (variogram) of $\xi(t_2)-\xi(t_1)$ is given by
\begin{displaymath}
  \Gamma(t_1,t_2)=\Sigma(t_2,t_2)-\Sigma(t_1,t_2)
  -\Sigma(t_2,t_1)+\Sigma(t_1,t_1)\,.
\end{displaymath}
We say that multivariate Gaussian process $\xi$ has \emph{wide sense
  stationary increments} if and only if $\Gamma (t_1,t_2)$ depends
only on the difference $t_1-t_2$. In the univariate case, this
property is equivalent to the fact that $\xi(t+s)-\xi(t)$, $t\in\R$,
is stationary for each $s\in\R$, see \cite[Lemma~1]{kab10}, while in
the multivariate case this is not so.

\begin{theorem}
  \label{thm:Gauss1e}
  The measure $\efrak_\lambda$ and a Gaussian process $\xi$ generate a
  stationary particle system if and only if
  \begin{equation}
    \label{eq:Gauss1e}
    \xi(t) = W(t) - \frac{1}{2}\Sigma(t,t)\lambda +b(t) +c \,, \quad
    t\in\R\,,
  \end{equation}
  where $W$ is a centred Gaussian process with wide sense stationary
  increments and variance $\Sigma(t,t)$, $c\in\R^d$ is deterministic,
  and $b:\R\to\R^d$ is a function orthogonal to $\lambda$ such that
  \begin{equation}
    \label{eq:b}
    b(t_2)-b(t_1)+\frac{1}{2}(\Sigma(t_2,t_1)-\Sigma(t_1,t_2))\lambda
  \end{equation}
  depends only on the difference $t_2-t_1$.
\end{theorem}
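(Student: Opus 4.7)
The plan is to apply the Laplace characterisation of Proposition~\ref{cor:FL}, according to which stationarity is equivalent to
\begin{displaymath}
\sum_i\langle u_i,\Delta m(i)\rangle+\frac{1}{2}\sum_{i,j}\langle u_i,\Delta\Sigma(i,j)u_j\rangle=0
\end{displaymath}
on the affine hyperplane $\sum_i u_i=\lambda$, for all $n\ge 1$ and all real $s,t_1,\dots,t_n$, where $\Delta m(i)=m(t_i)-m(t_i+s)$, $\Delta\Sigma(i,j)=\Sigma(t_i,t_j)-\Sigma(t_i+s,t_j+s)$ and $m(t)=\E\xi(t)$. I deduce the structural form from the cases $n=1,2$ and then verify sufficiency by a direct substitution.

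For necessity I parametrise the hyperplane via $u_n=\lambda-\sum_{i<n}u_i$, so that the left-hand side of the display becomes a polynomial of degree at most two in the free variables $u_1,\dots,u_{n-1}$; its constant, linear and quadratic coefficients must vanish separately. The case $n=1$ leaves only the constant term and forces $\langle\lambda,m(t)+\frac{1}{2}\Sigma(t,t)\lambda\rangle$ to be independent of $t$. I pick $c\in\R^d$ so that this constant equals $\langle\lambda,c\rangle$ and set $b(t):=m(t)+\frac{1}{2}\Sigma(t,t)\lambda-c$, so that $b\perp\lambda$ and $\xi(t)=W(t)-\frac{1}{2}\Sigma(t,t)\lambda+b(t)+c$ with $W(t):=\xi(t)-m(t)$ centred Gaussian of covariance $\Sigma$.

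The two remaining conditions come from $n=2$. Using $\Sigma(s,t)^\top=\Sigma(t,s)$, the quadratic coefficient collapses to $\Gamma(t_1,t_2)-\Gamma(t_1+s,t_2+s)$, whose vanishing is exactly the wide sense stationary increment property. Once this is known, the linear coefficient simplifies after substituting the expression of $m$ in terms of $b$ and using $\Sigma(t_1,t_1)+\Sigma(t_2,t_2)-\Sigma(t_1,t_2)-\Sigma(t_2,t_1)=\Gamma(t_1,t_2)$, yielding that $b(t_2)-b(t_1)+\frac{1}{2}(\Sigma(t_2,t_1)-\Sigma(t_1,t_2))\lambda$ depends only on $t_2-t_1$, which is exactly \eqref{eq:b}.

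For sufficiency I substitute the structural form back into $\log\psi_\xi(u;t)-\log\psi_\xi(u;t+s)$ for general $n$. The key algebraic manoeuvre is to split each $\Sigma(t_i,t_j)$ into its symmetric and antisymmetric parts and use the identity above to rewrite the symmetric part in terms of $\Sigma(t_i,t_i)$, $\Sigma(t_j,t_j)$ and $\Gamma(t_i,t_j)$; the constraint $\sum u_i=\lambda$ then collapses the diagonal $\Sigma(t_i,t_i)$-terms against the $-\frac{1}{2}\Sigma(t,t)\lambda$ piece of $m$. I expect the main difficulty to be the final bookkeeping: one must check that, once w.s.s.\ increments and $\langle\lambda,b(t)\rangle=0$ are invoked, the antisymmetric contributions combine with the translation defect of $b$ provided by~\eqref{eq:b} into an expression that vanishes identically on the hyperplane $\sum u_i=\lambda$.
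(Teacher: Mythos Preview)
Your proposal is correct and follows essentially the same route as the paper: both reduce stationarity to Proposition~\ref{cor:FL}, extract the necessary conditions from the Gaussian log-Laplace transform at $n=1,2$, and verify sufficiency by direct substitution. The only cosmetic difference is that the paper packages the identification of coefficients through Lemma~\ref{lemma:lt} (reading off $A^\top\Sigma A$, $A^\top(m+\Sigma a)$ and $\langle m,a\rangle+\tfrac12\langle a,\Sigma a\rangle$ for the projection $A$ onto $\LL=\{u_1+u_2=0\}$ and $a=(\lambda,0)$), whereas you expand the degree-two polynomial in the free variable $u_1$ by hand; the resulting invariants are the same up to adding a multiple of $\Gamma(t_1,t_2)\lambda$, which is already known to be shift-invariant from the quadratic term.
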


\begin{remark}
  \label{remark:lambda0}
  If $\lambda=0$, condition (\ref{eq:b}) implies that $b(t)-b(0)$ is
  an additive function, see \cite[Lemma~2]{kab10}. This is also the
  case if $\Sigma(t_1,t_2)$ is symmetric for all $t_1$ and $t_2$,
  e.g. in the univariate case where the orthogonality of $b$ and
  $\lambda$ implies that $b$ vanishes if $\lambda\neq0$. 
\end{remark}

We use the following lemma, that is is easy to prove by  direct
computation. 

\begin{lemma}
  \label{lemma:lt}
  Consider all Gaussian vectors in the Euclidean space $\R^n$ whose
  Laplace transform $\psi(u)$ is given for all $u$ from $\LL+a$, where
  $\LL$ is a linear subspace of $\R^n$ and $a\in\R^n$. Then all these
  vectors share the same values of $A^\top\Sigma A$, $A^\top(m+\Sigma
  a)$ and $\langle m,a\rangle+\thf\langle a,\Sigma a\rangle$, where
  $m$ and $\Sigma$ are the mean and covariance matrix of the
  corresponding vector and $A$ denotes any projection of $\R^n$ onto
  $\LL$.
\end{lemma}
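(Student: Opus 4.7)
The plan is to exploit the explicit form of the Laplace transform of a Gaussian vector and to separate its contributions by polynomial degree. For a Gaussian vector with mean $m$ and covariance $\Sigma$, one has
\[
\log\psi(u)=\langle u,m\rangle+\tfrac{1}{2}\langle u,\Sigma u\rangle,
\]
so prescribing $\psi$ on $\LL+a$ amounts to prescribing the polynomial $u\mapsto\log\psi(u)$ on this affine subspace.

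First I would parametrize $\LL+a$ by writing $u=a+v$ with $v\in\LL$ and expand
\[
\log\psi(a+v)
=\Bigl(\langle m,a\rangle+\tfrac{1}{2}\langle a,\Sigma a\rangle\Bigr)
+\langle v,m+\Sigma a\rangle
+\tfrac{1}{2}\langle v,\Sigma v\rangle.
\]
This is a polynomial in $v\in\LL$ of degree at most two. Since the constant, linear and quadratic components of such a polynomial on the linear space $\LL$ are uniquely determined by the polynomial itself (for example by evaluating at $v=0$, then differentiating once, then differentiating twice), any two Gaussian vectors sharing the same Laplace transform on $\LL+a$ must share each of the three components separately.

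Next I would translate these restricted objects into the matrix expressions in the statement. Let $A$ be any projection of $\R^n$ onto $\LL$, so that $Av=v$ for every $v\in\LL$. Then for $v\in\LL$,
\[
\langle v,\Sigma v\rangle=\langle Av,\Sigma Av\rangle=v^\top (A^\top\Sigma A)\,v,
\qquad
\langle v,m+\Sigma a\rangle=v^\top A^\top(m+\Sigma a),
\]
so knowing the quadratic and linear parts on $\LL$ is equivalent to knowing $A^\top\Sigma A$ and $A^\top(m+\Sigma a)$ as elements of $\R^{n\times n}$ and $\R^n$, respectively (they are independent of the particular choice of projection). Combined with the constant term $\langle m,a\rangle+\tfrac{1}{2}\langle a,\Sigma a\rangle$, this yields the three invariants asserted in the lemma.

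There is no real obstacle here beyond bookkeeping; the only subtle point is verifying that the matrix expressions $A^\top\Sigma A$ and $A^\top(m+\Sigma a)$ are genuinely invariants (not depending on $A$), which follows because two projections onto $\LL$ agree on $\LL$ and so yield the same restricted quadratic and linear forms.
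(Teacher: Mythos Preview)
Your argument is exactly the direct computation the paper has in mind (the paper gives no explicit proof, only remarking that the lemma ``is easy to prove by direct computation''). One small correction: your parenthetical claim that $A^\top\Sigma A$ and $A^\top(m+\Sigma a)$ are independent of the choice of projection is not literally true---an oblique versus orthogonal projection onto $\LL$ will in general give different matrices---but this is not needed for the lemma, which only asserts that for any \emph{fixed} projection $A$ these quantities coincide across all Gaussian vectors sharing the prescribed Laplace transform on $\LL+a$, and your main argument establishes precisely that.
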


\begin{proof}[Proof of Theorem~\ref{thm:Gauss1e}]
  The sufficiency follows by explicit writing of the Laplace
  transform of $(\xi(t_1),\dots,\xi(t_n))$.  For the necessity, let
  $n=2$ and apply Lemma~\ref{lemma:lt} and Proposition~\ref{cor:FL}
  with $\LL=\{(u_1,u_2)\in\R^{2d}:\; u_1+u_2=0\}$ and
  $a=(\lambda,0)\in\R^{2d}$. Define block matrices
  \begin{displaymath}
    A=
    \begin{pmatrix}
      I & -I\\
      -I & I
    \end{pmatrix}\,, \qquad
    \Sigma=
    \begin{pmatrix}
      \Sigma(t_1,t_1) & \Sigma(t_1,t_2)\\ \\
      \Sigma(t_2,t_1) & \Sigma(t_2,t_2)
    \end{pmatrix}\,,
  \end{displaymath}
  where $I$ is the $d$-dimensional unit matrix. Note that $A$ defines
  a projection on $\LL$ and $\Sigma$ is the covariance of
  $(\xi(t_1),\xi(t_2))$. Then all elements of $A^\top\Sigma A$ are
  proportional to $\Gamma(t_1,t_2)$, meaning that
  $W(t)=\xi(t)-\E\xi(t)$, $t\in\R$, has wide sense stationary
  increments.
  
  Define $m(t)=\E\xi(t)$.  Calculating $A^\top(m+\Sigma a)$ with
  $m=(m(t_1),m(t_2))$ it is easy to see that
  \begin{equation}
    \label{eq:mu-second}
    m(t_2)-m(t_1)+ (\Sigma(t_1,t_1)-\Sigma(t_2,t_1))\lambda
  \end{equation}
  is invariant after $(t_1,t_2)$ is replaced by
  $(t_1+s,t_2+s)$. Denoting 
  \begin{displaymath}
    b(t)=m(t)+\thf\Sigma(t,t)\lambda
  \end{displaymath}   
  and using the fact that $\Gamma(t_1,t_2)=\Gamma(t_1+s,t_2+s)$, we
  arrive at (\ref{eq:b}).  Furthermore,
  \begin{displaymath}
    \langle m,a\rangle+\thf \langle a,\Sigma
    a\rangle=\langle m(t_1),\lambda\rangle+\thf
    \langle\lambda,\Sigma(t_1,t_1)\lambda\rangle
    =\langle b(t),\lambda\rangle 
  \end{displaymath}
  does not depend on $t_1$, so that $\langle b(t),\lambda\rangle$ is
  constant. Finally, set $c=b(0)$ and replace $b(t)$ by $b(t)-b(0)$.
\end{proof}

\subsection{Mixtures of exponential measures.}

Consider particle systems generated by Poisson processes with intensity measures given by mixtures of $\efrak_\lambda$ for $\lambda\in E\subset\R^d$.

\begin{theorem}
  \label{lemma:mixture}
  Assume that $\xi$ is a stochastic process such that
  \eqref{eq:intexp} holds for all $\lambda$ from an open neighbourhood
  $U$ of $E\subset \R^d$ and the measure
  \begin{displaymath}
    \Lambda=\int_E\efrak_\lambda Q(d\lambda)
  \end{displaymath}  
  is locally finite, where $Q$ is a measure supported by $E$. Then the
  particle system generated by $\Lambda$ and $\xi$ is stationary if
  and only if, for all $\lambda \in E$, the system
  $(\efrak_\lambda,\xi)$ is stationary.
\end{theorem}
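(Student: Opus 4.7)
The forward direction is immediate: if $(\efrak_\lambda,\xi)$ is stationary for every $\lambda\in E$, then $(\efrak_\lambda)_{t_1,\ldots,t_n}=(\efrak_\lambda)_{t_1+s,\ldots,t_n+s}$, and integrating against $Q(d\lambda)$ by Fubini yields $\Lambda_{t_1,\ldots,t_n}=\Lambda_{t_1+s,\ldots,t_n+s}$; Proposition~\ref{prop:stat} then gives stationarity of $(\Lambda,\xi)$.

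For the non-trivial direction, my plan is to recycle the disintegration used in the proof of Theorem~\ref{thm:Laplace1e}. For a single $\lambda\in E$ one has $(\efrak_\lambda)_{t_1,\ldots,t_n}(A)=\int\mu^\lambda_{t_1,\ldots,t_n}(A-(z,\ldots,z))e^{-\langle\lambda,z\rangle}\,dz$ with the finite measure $\mu^\lambda_{t_1,\ldots,t_n}$ of \eqref{eq:mut} supported in $\{y_1=0\}$. Substituting $\Lambda=\int_E\efrak_\lambda Q(d\lambda)$ and applying the linear change of coordinates $\phi(z_1,\ldots,z_n)=(z_1,z_2-z_1,\ldots,z_n-z_1)$, one obtains, for bounded Borel $A_1\subset\R^d$ and Borel $A_r\subset\R^{d(n-1)}$,
\begin{equation*}
\Lambda_{t_1,\ldots,t_n}\bigl(\phi^{-1}(A_1\times A_r)\bigr)
=\int_E K^\lambda_{t_1,\ldots,t_n}(A_r)\int_{A_1}e^{-\langle\lambda,y\rangle}\,dy\,Q(d\lambda),
\end{equation*}
where $K^\lambda_{t_1,\ldots,t_n}(A_r)=\E\bigl[\mathbf{1}_{A_r}(\xi(t_2)-\xi(t_1),\ldots,\xi(t_n)-\xi(t_1))\,e^{\langle\lambda,\xi(t_1)\rangle}\bigr]$.

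The heart of the argument is a Laplace-transform uniqueness step. Writing $D^\lambda(A_r)$ for the difference $K^\lambda_{t_1,\ldots,t_n}(A_r)-K^\lambda_{t_1+s,\ldots,t_n+s}(A_r)$, Proposition~\ref{prop:stat} applied to the previous display gives $\int_E D^\lambda(A_r)\int_{A_1}e^{-\langle\lambda,y\rangle}\,dy\,Q(d\lambda)=0$ for every bounded Borel $A_1$. Swapping the order of integration and letting $A_1$ vary shows that the bilateral Laplace transform $y\mapsto\int_E D^\lambda(A_r)e^{-\langle\lambda,y\rangle}Q(d\lambda)$ vanishes Lebesgue-a.e.\ on $\R^d$, hence identically by its analyticity (which is guaranteed by \eqref{eq:intexp} on the open neighborhood $U$ of $E$). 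Uniqueness of the bilateral Laplace transform then forces the signed measure $D^\lambda(A_r)\,Q(d\lambda)$ to be zero, so $D^\lambda(A_r)=0$ for $Q$-a.e.\ $\lambda$. Running $A_r$ through a countable generating family (and $(t_1,\ldots,t_n,s)$ through countable dense values) yields $K^\lambda_{t_1,\ldots,t_n}=K^\lambda_{t_1+s,\ldots,t_n+s}$ for $Q$-a.e.\ $\lambda$, which through the Fourier identity \eqref{eq:Fourier} is equivalent to stationarity of $(\efrak_\lambda,\xi)$. Continuity of $\lambda\mapsto K^\lambda$ under \eqref{eq:intexp} then extends the conclusion from $Q$-a.e.\ $\lambda$ to every $\lambda$ in the support of $Q$, i.e.\ to all of $E$.

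The main obstacle I anticipate is technical: verifying that $\int_E|D^\lambda(A_r)|e^{-\langle\lambda,y\rangle}Q(d\lambda)$ is finite uniformly on compact $y$-sets, so that the Fubini swap and the Laplace inversion are legitimate. This should follow from \eqref{eq:intexp} on $U\supset E$ combined with local finiteness of $Q$, most likely after truncating $Q$ to compact subsets of $E$ to reduce to finite signed measures and then passing to the limit.
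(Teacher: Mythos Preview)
Your approach is correct and genuinely different from the paper's. The paper argues geometrically: it slices $E$ by a hyperplane $\{\lambda:\langle\lambda,v\rangle=1\}$ into $E_1,E_2$ and uses the distinct exponential decay rates of $\efrak_\lambda$ under translation by $v$ to show that if $\Lambda$ satisfies \eqref{eq:stat} then so do $\Lambda_1,\Lambda_2$ separately; iterating produces relatively compact $E_k\downarrow\{\lambda_0\}$ with each $\Lambda_k$ satisfying \eqref{eq:stat}, and a dominated--convergence passage (this is where the neighbourhood $U$ enters) yields the result for $\efrak_{\lambda_0}$. Your route instead reads the disintegration \eqref{eq:mu-disint} as saying that stationarity of $(\Lambda,\xi)$ forces the bilateral Laplace transform (in $y$) of the signed measure $D^\lambda(A_r)\,Q(d\lambda)$ to vanish, whence that measure is zero by Laplace uniqueness; continuity of $\lambda\mapsto K^\lambda$ on $U$ then upgrades ``$Q$-a.e.'' to ``all of $E$''. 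This is shorter and makes the role of $U$ transparent, at the cost of invoking Laplace inversion for signed measures; the paper's argument is more elementary and self--contained.

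Two remarks that tighten your write--up. First, the integrability worry is not an obstacle and no truncation of $Q$ is needed: for any ball $B_r(y_0)$ one has $\efrak_\lambda(B_r(y_0))\ge \tfrac{1}{2}\mathrm{vol}(B_r)\,e^{-\langle\lambda,y_0\rangle}$, so local finiteness of $\Lambda_{t_1,\dots,t_n}$ on $\phi^{-1}(B_r(y_0)\times A_r)$ immediately gives $\int_E K^\lambda(A_r)e^{-\langle\lambda,y_0\rangle}Q(d\lambda)<\infty$ for every $y_0$, which justifies both Fubini and Laplace uniqueness. Second, the detour through countable dense $(t_1,\dots,t_n,s)$ is unnecessary: for each \emph{fixed} tuple and $A_r$ you already obtain $D^\lambda(A_r)=0$ for $Q$-a.e.\ $\lambda$, and continuity in $\lambda$ extends this to all $\lambda\in E$; then simply quantify over all tuples and sets.
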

\begin{proof}
  We only need to prove the necessity. For $v \in \R^d$ define
  \begin{align*}
    E_1&=\{\lambda \in E:\, \langle\lambda,v\rangle <1 \} \,,\\
    E_2&=\{\lambda \in E:\, \langle\lambda,v\rangle \geq1 \} \,.
  \end{align*}
  Let $\Lambda_i=\int_{E_i} \efrak_\lambda Q(d\lambda)$,
  $i=1,2$. Without loss of generality assume that neither $\Lambda_1$
  nor $\Lambda_2$ is the zero measure.  Let $A$ be a bounded Borel
  set. Since $\Lambda$ satisfies (\ref{eq:stat}),
  \begin{equation}
    \label{eq:convcontra}
    \Lambda_1*(P_{t_1,\dots,t_n}-P_{t_1+s,\dots,t_n+s})(A)=
    \Lambda_2*(P_{t_1+s,\dots,t_n+s}-P_{t_1,\dots,t_n})(A) \,.
  \end{equation}
  Assume that (\ref{eq:convcontra}) is positive. Since $Q(E_1)>0$,
  \begin{align*}
    \Lambda_1*(P_{t_1,\dots,t_n}-P_{t_1+s,\dots,t_n+s})(A+v) &>
    e^{-1}\Lambda_1*(P_{t_1,\dots,t_n}-P_{t_1+s,\dots,t_n+s})(A) \,,\\
    \Lambda_2*(P_{t_1+s,\dots,t_n+s}-P_{t_1,\dots,t_n})(A+v) &\leq
    e^{-1}\Lambda_2*(P_{t_1+s,\dots,t_n+s}-P_{t_1,\dots,t_n})(A) \,.
  \end{align*}
  In view of (\ref{eq:convcontra}),
  \begin{displaymath}
    \Lambda_1*(P_{t_1,\dots,t_n}-P_{t_1+s,\dots,t_n+s})(A+v) >
    \Lambda_2*(P_{t_1+s,\dots,t_n+s}-P_{t_1,\dots,t_n})(A+v) \,.
  \end{displaymath}
  Rearranging the terms yields that
  \begin{displaymath}
    (\Lambda_1+\Lambda_2)*P_{t_1,\dots,t_n}(A+v)
    > (\Lambda_1+\Lambda_2)*P_{t_1+s,\dots,t_n+s}(A+v) \,,
  \end{displaymath}
  which contradicts that $\Lambda=\Lambda_1+\Lambda_2$ satisfies
  (\ref{eq:stat}).  A similar argument excludes the negativity of
  (\ref{eq:convcontra}), and therefore $\Lambda_1$ and $\Lambda_2$
  satisfy (\ref{eq:stat}) for all bounded Borel $A$.

  Consider any $\lambda_0\in E$. By cutting $E$ with hyperplanes, it
  is possible to construct a sequence of relatively compact sets
  $E_k\subset E$, $k\geq1$, such that $E_k\downarrow\{\lambda_0\}$,
  the closure of $E_1$ is a subset of $U$ and
  $\Lambda_k=\int_{E_k}\efrak_\lambda Q(d\lambda)$ satisfies
  (\ref{eq:stat}) for all $k$.  Since (\ref{eq:stat}) is scale
  invariant, it also holds for
  $\tilde{\Lambda}_k=\int_{E_k}\efrak_\lambda \tilde{Q}_k(d\lambda)$
  with $\tilde{Q}_k(\cdot)=Q(\cdot)/Q(E_k)$.  For all $k$,
  \begin{equation}
    \label{eq:sandwich}
    \inf_{\lambda \in E_k} e^{-\langle \lambda,x \rangle} \leq
    \int_{E_k} e^{-\langle\lambda,x\rangle} \tilde{Q}_k(d\lambda) \leq
    \sup_{\lambda \in E_k} e^{-\langle \lambda,x \rangle}  \,,
  \end{equation}
  since the both sides of (\ref{eq:sandwich}) converge to
  $e^{-\langle\lambda_0,x\rangle}$, $\tilde{\Lambda}_k(A)\to
  \efrak_{\lambda_0}(A)$ for all measurable $A$.

  It remains to show that the limiting measure satisfies
  (\ref{eq:stat}). By \eqref{eq:mu-disint},
  \begin{equation}
    \label{eq:muq}
    \tilde{\Lambda}_k*P_{t_1,\dots,t_n}(A)=\int_{\R^d}\int_{E_k} 
    \mu_{t_1,\dots,t_n}(A-x)e^{-\langle \lambda,x\rangle}
    \tilde{Q}_k(d\lambda) \, dx \,,
  \end{equation}
  where $\mu_{t_1,\dots,t_n}(A)$ is defined in \eqref{eq:mut}.  

  Let $A_1$ be the set of $x \in \R^d$ such that $(x,y) \in A$ for
  some $y\in\R^{d(n-1)}$. Since $\tilde{Q}_k(E_k)=1$ and
  $\mu_{t_1,\dots,t_n}(A-x) \le \mathbf{1}_{A_1}(x) \E e^{\langle
    \lambda,\xi(t_1) \rangle}$,
  \begin{displaymath}
    \int_{E_k} 
    \mu_{t_1,\dots,t_n}(A-x)e^{-\langle \lambda,x\rangle}
    \tilde{Q}_k(d\lambda)\le c\mathbf{1}_{A_1}(x)
    e^{-\langle \lambda,x\rangle}\,,
  \end{displaymath}
  where $c$ is the supremum of $\E e^{\langle
    \lambda,\xi(t_1)\rangle}$ for $\lambda$ from the closure of
  $E_1$. This supremum is finite, since $\E e^{\langle \lambda, \xi(t)
    \rangle}$ is analytic, hence continuous, in its domain $U$.  Since
  $A_1$ is bounded,
  \begin{displaymath}
    \int_{\R^d} c\mathbf{1}_{A_1}(x)
    e^{-\langle \lambda,x\rangle} dx <\infty
  \end{displaymath}
  and the Lebesgue dominated convergence theorem yields
  \begin{align*}
    \lim_{k \to \infty}\tilde{\Lambda}_k*P_{t_1,\dots,t_n}(A)
    =&\int_{\R^d}\lim_{k \to \infty}\int_{E_k} 
    \mu_{t_1,\dots,t_n}(A-x)e^{-\langle \lambda,x\rangle}
    \tilde{Q}_k(d\lambda) \, dx \\
    =&\int_{\R^d} \mu_{t_1,\dots,t_n}(A-x)e^{-\langle
      \lambda_0,x\rangle}dx
    =\efrak_{\lambda_0}*P_{t_1,\dots,t_n}(A) \,,
  \end{align*}
  where the second equality follows by a similar argument as
  (\ref{eq:sandwich}).
\end{proof}

Thus, if the particle system $(\Lambda,\xi)$ is
stationary and the conditions of Theorem~\ref{lemma:mixture} are
satisfied, then the support of the measure $Q$ is contained in
$E_{P_{t_1}P_{t_2}}$ (see \eqref{eq:E}) for all $t_1,t_2\in \R$.

\begin{prop}
  \label{prop:twoexp}
  Let $\lambda_1, \lambda_2 \in \R^d$ with $\lambda_1 \neq
  \lambda_2$. If the Gaussian systems $(\efrak_{\lambda_1},\xi)$ and
  $(\efrak_{\lambda_2},\xi)$ are stationary, then the
  one-dimensional stochastic process $\langle
  \xi-\E\xi,\lambda_2-\lambda_1\rangle$ is stationary.
\end{prop}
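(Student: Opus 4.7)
The plan is to apply Theorem~\ref{thm:Gauss1e} to each $\lambda_i$, combine the resulting representations of $\xi$, and reduce the problem to showing that $\phi(t) := \langle v, \Sigma(t,t) v\rangle$ is constant in $t$, where $v = \lambda_2 - \lambda_1$. Once this is established, the wide sense stationary increments of $W(t) = \xi(t) - \E\xi(t)$ provided by Theorem~\ref{thm:Gauss1e} yield that $v^\top \Sigma(t_1, t_2) v$ depends only on $t_1 - t_2$, so that $\langle \xi - \E\xi, v\rangle$ is a centred Gaussian process with lag-only covariance, hence stationary.

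Each stationarity assumption supplies a decomposition $\xi(t) = W(t) - \thf \Sigma(t,t)\lambda_i + b_i(t) + c_i$, in which the centred Gaussian part $W = \xi - \E\xi$ is the same for both values of $i$. Equating the two expressions for $\E\xi(t)$ gives $b_2(t) - b_1(t) = \thf \Sigma(t,t) v + \text{const}$. Taking the inner product of this identity with $\lambda_1$ and with $\lambda_2$ and using the orthogonality $\langle b_i(t), \lambda_i\rangle = 0$ provided by Theorem~\ref{thm:Gauss1e} yields
\[
\langle b_1(t), \lambda_2\rangle + \langle b_2(t), \lambda_1\rangle = -\thf \phi(t) + \text{const}.
\]

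Next, I use condition~\eqref{eq:b} twice: once with $\lambda = \lambda_1$, taking the inner product with $\lambda_2$, and once with $\lambda = \lambda_2$, paired with $\lambda_1$. Summing the two resulting identities, the covariance cross-terms cancel thanks to $\Sigma(t_1, t_2)^\top = \Sigma(t_2, t_1)$ (compare Example~\ref{ex:wp-shifted}), leaving that $\langle b_1(t_2) - b_1(t_1), \lambda_2\rangle + \langle b_2(t_2) - b_2(t_1), \lambda_1\rangle$ depends only on $t_2 - t_1$. Combining with the previous display shows that $\phi(t_2) - \phi(t_1)$ depends only on $t_2 - t_1$.

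The concluding step is a standard Cauchy functional equation argument: the map $t \mapsto \phi(t) - \phi(0)$ is additive, and since $\phi \geq 0$ as a quadratic form in a covariance matrix, this additive function is bounded below on $\R$; by oddness it is then bounded on $\R$, so identically zero. Hence $\phi$ is constant, and the argument concludes as outlined above. I expect the main obstacle to be engineering the cancellation of the cross covariance terms in the third paragraph, which hinges on pairing condition~\eqref{eq:b} for $\lambda_1$ against $\lambda_2$ (and vice versa) and then invoking the transpose relation for $\Sigma$; the Cauchy step and the passage from lag-only covariance to stationarity are then routine.
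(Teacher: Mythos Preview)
Your proposal is correct and follows essentially the same route as the paper's proof: apply Theorem~\ref{thm:Gauss1e} for each $\lambda_i$, use the skew-symmetry of $\Sigma(t_2,t_1)-\Sigma(t_1,t_2)$ to cancel the cross covariance terms after pairing \eqref{eq:b} for $\lambda_1$ with $\lambda_2$ and vice versa, deduce that $\phi(t)=\langle v,\Sigma(t,t)v\rangle$ differs from a constant by an additive function, and use nonnegativity to force constancy. The paper outsources the Cauchy step to \cite[Lemma~2]{kab10} and the final passage to stationarity to the standard fact that a univariate Gaussian process with stationary increments and constant variance is stationary, whereas you spell both out explicitly; otherwise the arguments coincide.
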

\begin{proof}
  Writing \eqref{eq:Gauss1e} for $\lambda_i$, $i=1,2$, we arrive at
  \begin{displaymath}
    W(t) - \frac{1}{2}\Sigma(t,t)\lambda_1 +b_1(t)+c_1
    =W(t) - \frac{1}{2}\Sigma(t,t)\lambda_2 +b_2(t)+c_2\,.
  \end{displaymath}
  Since $\Sigma(t_2,t_1)-\Sigma(t_1,t_2)$ is a skew symmetric matrix,
  (\ref{eq:b}) implies that
  \begin{equation}
    \label{eq:twolambda}
    \langle \lambda_2,b_1(t_2)-b_1(t_1) \rangle + 
    \langle \lambda_1,b_2(t_2)-b_2(t_1) \rangle
  \end{equation}   
  is invariant after $(t_1,t_2)$ is replaced by
  $(t_1+s,t_2+s)$. Denote shortly
  $\Delta\lambda=\lambda_1-\lambda_2$. Rewriting (\ref{eq:twolambda})
  yields
  \begin{multline*}
    \label{eq:deltaS}
    \langle \Delta\lambda,\Sigma(t_1,t_1)\Delta \lambda\rangle -
    \langle \Delta\lambda,\Sigma(t_2,t_2)\Delta \lambda\rangle\\ 
    =\langle \Delta\lambda,\Sigma(t_1+s,t_1+s)\Delta \lambda\rangle -
    \langle \Delta\lambda,\Sigma(t_2+s,t_2+s)\Delta \lambda\rangle \,.
  \end{multline*}
  By \cite[Lemma~2]{kab10}, the function $\langle
  \Delta\lambda,\Sigma(t,t)\Delta \lambda\rangle$ is an additive
  function plus a constant. In view of the positive definiteness of
  $\Sigma(t,t)$, we conclude that $\langle
  \Delta\lambda,\Sigma(t,t)\Delta \lambda\rangle$ is constant for all
  $t$. The statement follows from the fact that a univariate Gaussian
  process with stationary increments and constant variance is itself
  stationary.
\end{proof}

The following result characterises stationary particle systems in case
the two-sided D\'eny equation reduces to the one-sided one.

\begin{corollary}
  \label{cor:decomp}
  Assume that $(\Lambda,\xi)$ is a stationary particle system, where
  $\xi$ is a Gaussian process such that $P_{t_1}=P_{t_2}*\sigma$ for a
  Gaussian measure $\sigma$ and some $t_1\neq t_2$. If no linear
  combination of the components of $\xi-\E\xi$ is stationary, then
  $\Lambda=c\efrak_\lambda$ for some $c>0$ and $\xi$ is given by
  \eqref{eq:Gauss1e}.
\end{corollary}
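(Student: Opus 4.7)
The plan is to reduce the two-sided stationarity equation to the one-sided D\'eny equation via the hypothesis $P_{t_1}=P_{t_2}*\sigma$, conclude that $\Lambda$ is a mixture of exponential measures, and then collapse the mixture to a single exponential using Proposition~\ref{prop:twoexp} together with the non-stationarity hypothesis; Theorem~\ref{thm:Gauss1e} then furnishes the form \eqref{eq:Gauss1e} for $\xi$.

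First, by Proposition~\ref{prop:stat} applied with $n=1$, stationarity forces $\Lambda*P_{t_1}=\Lambda*P_{t_2}$, and substituting $P_{t_1}=P_{t_2}*\sigma$ shows that $\mu:=\Lambda*P_{t_2}$ satisfies the D\'eny equation $\mu*\sigma=\mu$. In the nondegenerate case, where the support of $\sigma$ is all of $\R^d$ (i.e.\ $\Sigma(t_1,t_1)-\Sigma(t_2,t_2)$ is strictly positive definite), D\'eny's theorem gives
\[
\mu=\int_{E_\sigma}\efrak_\lambda\,Q'(d\lambda)
\]
for some measure $Q'$ on $E_\sigma$.

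Next I would upgrade this to a representation of $\Lambda$ itself. Since convolving an exponential by a Gaussian simply rescales it, $\efrak_\lambda*P_{t_2}=c_\lambda\efrak_\lambda$ with $c_\lambda=\exp(\langle\lambda,\E\xi(t_2)\rangle+\tfrac12\langle\lambda,\Sigma(t_2,t_2)\lambda\rangle)$. Hence the candidate $\Lambda_0:=\int\efrak_\lambda c_\lambda^{-1}Q'(d\lambda)$ satisfies $\Lambda_0*P_{t_2}=\mu=\Lambda*P_{t_2}$. Injectivity of convolution by a nondegenerate Gaussian on the cone of locally finite measures then forces $\Lambda=\Lambda_0$, so $\Lambda$ is a mixture of exponentials supported on $E_\sigma$. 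Since $\xi$ is Gaussian, the integrability condition \eqref{eq:intexp} holds for every $\lambda\in\R^d$, so Theorem~\ref{lemma:mixture} applies and asserts that $(\efrak_\lambda,\xi)$ is stationary for each $\lambda$ in the support of the mixing measure. If this support contained two distinct points $\lambda_1\neq\lambda_2$, Proposition~\ref{prop:twoexp} would force the one-dimensional process $\langle\xi-\E\xi,\lambda_2-\lambda_1\rangle$ to be stationary, contradicting the hypothesis. Hence the mixing measure is concentrated at a single $\lambda$, giving $\Lambda=c\efrak_\lambda$ for some $c>0$, and Theorem~\ref{thm:Gauss1e} delivers \eqref{eq:Gauss1e}.

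The principal obstacle is the deconvolution step: although heuristically clear, transferring the mixture representation of $\mu$ to one of $\Lambda$ requires a precise injectivity statement for convolution by a nondegenerate Gaussian on the cone of locally finite (not necessarily finite) measures, rather than on finite signed measures where Fourier analysis handles it directly. A secondary subtlety is the degenerate case in which $\Sigma(t_1,t_1)-\Sigma(t_2,t_2)$ is merely positive semidefinite: then $\sigma$ is supported on a proper affine subspace of $\R^d$ and a variant of D\'eny's theorem that also admits translation-invariant solutions in the orthogonal complement must be invoked before the remaining steps can be carried out.
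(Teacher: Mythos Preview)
Your plan coincides with the paper's proof, which in its entirety reads: ``The D\'eny theorem implies that $\Lambda$ is a mixture of exponential measures, so the result follows from Theorem~\ref{lemma:mixture} and Proposition~\ref{prop:twoexp}.'' The deconvolution concern you flag is legitimate and is simply glossed over in the paper as well (note that earlier, in Section~\ref{sec:conv-equat}, the paper itself only concludes that $\Lambda*P_t$, not $\Lambda$, is a mixture of exponentials); it can be resolved because the standing hypothesis that $\Lambda*P_t$ is locally finite forces $\int e^{-\varepsilon|x|^2}\,\Lambda(dx)<\infty$ for suitable $\varepsilon>0$, after which injectivity of convolution by a nondegenerate Gaussian follows from standard Fourier--Laplace or heat-semigroup arguments. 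Your secondary worry about degenerate $\sigma$ is likewise not treated in the paper; the corollary is in practice applied with $\sigma$ of full support (for instance when $\xi(0)=0$ a.s., as noted immediately after the proof), and the genuinely degenerate case would indeed require the refinement you indicate.
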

\begin{proof}
  The D\'eny theorem implies that $\Lambda$ is a mixture of
  exponential measures, so the result follows from
  Theorem~\ref{lemma:mixture} and Proposition~\ref{prop:twoexp}.
\end{proof}

In particular, Corollary~\ref{cor:decomp} applies if $\xi(t)$ is
a.s. deterministic for at least one $t$, for instance if
$\xi(0)=0$. Furthermore, it yields the result of \cite{kab10} for
non-stationary univariate process $\xi$.

\begin{example}
  \label{ex:two-lambda}
  Let $\xi^1(t)=\xi^2(t)=W(t)-a\abs{t}/2$, where $W$ is the two-sided
  Brownian motion and $a\in\R$. Then $\Lambda=\int_\R
  \efrak_{(a+\lambda,-\lambda)}Q(d\lambda)$ for a measure $Q$ on $\R$
  satisfying the integrability condition and
  $\xi=(\xi^1,\xi^2)$ generate a stationary particle system.
\end{example}

\subsection{Measures with exponential polynomial densities.}

Assume that $\Lambda$ has the density
\begin{displaymath}
  p(x)e^{-\langle
  \lambda,x\rangle}=\sum_{\abs{\alpha}\le k}c_\alpha x^\alpha e^{\langle
  -\lambda,x\rangle}\,,
\end{displaymath}
where $p(x)$ is a non-negative polynomial of degree $k$. We use the
multi-index notation, i.e. $\alpha=(\alpha^1,\dots,\alpha^d)$,
$\abs{\alpha}=\alpha^1+\dots+\alpha^d$ and
$x^\alpha=(x^1)^{\alpha^1}\cdots (x^d)^{\alpha^d}$. Note that one can
also consider solutions of convolution equations with not necessarily
non-negative polynomials, which however do not admit an interpretation
as intensities of point processes. Nonetheless, even then we speak
about stationary particle systems.

\begin{theorem}
  \label{thm:polynomial}
  If the particle system $(p(x)e^{-\langle\lambda,x\rangle}, \xi)$ for
  a polynomial $p$ is stationary, then the particle system
  $(q(x)e^{-\langle\lambda,x\rangle}, \xi)$ is stationary for each
  polynomial $q$ obtained as a partial derivative of $p$.
\end{theorem}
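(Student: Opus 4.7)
The approach rests on the linearity of the stationarity condition \eqref{eq:stat} together with its invariance under translations of the intensity measure in $\R^d$. If $\Lambda$ satisfies \eqref{eq:stat} and $T_y\Lambda$ denotes its translate by $y\in\R^d$, then from \eqref{eq:Lambda} the intensity $(T_y\Lambda)_{t_1,\dots,t_n}$ is merely the shift of $\Lambda_{t_1,\dots,t_n}$ along the diagonal of $\R^{dn}$, so \eqref{eq:stat} is preserved. Combined with linearity in $\Lambda$, this shows that any finite linear combination of translates of $\Lambda^{p,\lambda}$, where $\Lambda^{p,\lambda}$ denotes the measure with density $p(x)e^{-\langle\lambda,x\rangle}$, generates a stationary particle system, as does any limit of such combinations for which the limit can be passed through the convolution in \eqref{eq:Lambda}.

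Fix a coordinate direction $e_j\in\R^d$. For $\eps\neq 0$ the signed measure
\begin{displaymath}
\mu_\eps=\eps^{-1}\bigl(T_{\eps e_j}\Lambda^{p,\lambda}-\Lambda^{p,\lambda}\bigr)
\end{displaymath}
has density $g_\eps(x)e^{-\langle\lambda,x\rangle}$ with
\begin{displaymath}
g_\eps(x)=\eps^{-1}\bigl(e^{\eps\lambda_j}p(x-\eps e_j)-p(x)\bigr),
\end{displaymath}
and generates a stationary particle system by the preceding paragraph. As $\eps\to 0$ one has
\begin{displaymath}
g_\eps(x)\longrightarrow \lambda_j p(x)-\frac{\partial p}{\partial x^j}(x)
\end{displaymath}
pointwise, and Taylor's theorem yields a uniform majorant $|g_\eps(x)|\le C(1+|x|)^{\deg p}$ for $|\eps|\le 1$. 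Passing the limit through \eqref{eq:stat} evaluated on an arbitrary bounded Borel set $A\subset\R^{dn}$ by dominated convergence then shows that the measure with density $(\lambda_j p-\partial p/\partial x^j)(x)e^{-\langle\lambda,x\rangle}$ also generates a stationary particle system. Since $\lambda_j\Lambda^{p,\lambda}$ trivially generates such a system by linearity, subtraction gives the claim for $q=\partial p/\partial x^j$. Iterating on the order of differentiation extends the result to every iterated partial derivative of $p$.

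The main obstacle is the dominated convergence step: one needs the majorant $C(1+|x|)^{\deg p}e^{-\langle\lambda,x\rangle}$ to be integrable against $P_{t_1,\dots,t_n}(A-\tilde x)$ for bounded $A$, where $\tilde x=(x,\dots,x)$. This is not purely formal, since a non-negative polynomial of degree $k$ need not grow like $(1+|x|)^k$ in every direction, but it follows readily from an exponential integrability hypothesis of the form \eqref{eq:intexp} holding on an open neighbourhood of $\lambda$, which is the natural auxiliary condition to impose in the spirit of Theorem~\ref{lemma:mixture}.
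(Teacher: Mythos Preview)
Your approach is valid under the additional integrability hypothesis you invoke, and it shares the underlying idea with the paper's proof: stationarity is preserved under translation of $\Lambda$, and combinations of translates yield the partial derivatives of $p$. The difference is purely in execution. You extract $\partial p/\partial x^j$ by a limit of difference quotients and therefore need dominated convergence with a majorant $C(1+|x|)^{\deg p}e^{-\langle\lambda,x\rangle}$; as you correctly observe, this is not guaranteed by the standing local-finiteness assumption and calls for a neighbourhood version of \eqref{eq:intexp}.

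The paper sidesteps the limit altogether by using that $p$ is a polynomial, so its Taylor expansion is a finite algebraic identity. After the substitution $z=u+x$,
\[
\Lambda_{t_1,\dots,t_n}(A+x)
= e^{-\langle\lambda,x\rangle}\sum_{\beta\ge 0}\frac{x^\beta}{\beta!}
\int_{\R^d}P_{t_1,\dots,t_n}(A-u)\,q_\beta(u)\,e^{-\langle\lambda,u\rangle}\,du,
\]
so $e^{\langle\lambda,x\rangle}\Lambda_{t_1,\dots,t_n}(A+x)$ is a polynomial in $x$ whose coefficients are exactly the quantities that must be shown invariant. Stationarity applied to all translates $A+x$ gives equality of two polynomials in $x$, hence equality of their coefficients, proving the claim for every $q_\beta$ at once. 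No limits are taken, and since each $q_\beta$ lies in the linear span of finitely many translates $p(\cdot+x_i)$ (the translates of a polynomial span a finite-dimensional space containing all its derivatives), the coefficient integrals are automatically finite under the standing hypothesis alone. Your argument is the infinitesimal version of this algebraic one; the paper's route buys both a cleaner proof and the elimination of the auxiliary integrability condition you had to impose.
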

\begin{proof}
  For each $n$, bounded Borel set $A$ in $\R^{dn}$ and $x\in\R^d$,
  \begin{align*}
    \Lambda_{t_1,\dots,t_n}(A+x) 
    & = \int_{\R^d} P_{t_1,\dots,t_n}(A+x-z)p(z)e^{-\langle
      \lambda,z\rangle} dz\\
    & = \int_{\R^d} P_{t_1,\dots,t_n}(A-u)p(u+x)e^{-\langle
      \lambda,u+x\rangle} du\\
    & = \sum_{\beta\ge0} \frac{1}{\beta!}x^{\beta} e^{-\langle
      \lambda,x\rangle}\int_{\R^d} P_{t_1,\dots,t_n}(A-u)q_\beta(u)e^{-\langle
      \lambda,u\rangle} du\,,
  \end{align*}
  where $q_\beta$ is the partial derivative of $p$ of order $\beta$
  and $\beta!=\beta^1!\cdots\beta^d!$. The stationarity of the
  particle system and the uniqueness of the polynomial imply that the
  coefficients of the polynomial do not change, and so the statement
  of the theorem follows.
\end{proof}

\begin{theorem}
  \label{thr:sums-polynomials}
  The process $\xi$ and $\Lambda$ with density $\sum_{i=1}^n
  p_i(x)e^{-\langle \lambda_i,x\rangle}$, where $p_1,\dots,p_n$ are
  polynomials and $\lambda_1,\dots,\lambda_n\in \R^d$, generate a
  stationary particle system if and only if, for all $i=1,\dots,n$,
  the process $\xi$ and measure with density $p_i(x)e^{-\langle
    \lambda_i,x\rangle}$ form stationary particle systems.
\end{theorem}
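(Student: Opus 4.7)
The sufficiency direction is immediate: if each $(p_i(x)e^{-\langle\lambda_i,x\rangle},\xi)$ is stationary, summing the intensities preserves stationarity, since $\Lambda_{t_1,\dots,t_n}(A)$ depends linearly on the density of $\Lambda$. The whole content is in the necessity direction, and my plan is to isolate each exponential component by translating the test set $A$ and exploiting the linear independence of exponential polynomials with distinct exponents.

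Let $R_i(A)=\int_{\R^d}\bigl(P_{t_1,\dots,t_n}(A-x)-P_{t_1+s,\dots,t_n+s}(A-x)\bigr)p_i(x)e^{-\langle\lambda_i,x\rangle}dx$, so that stationarity of $(\Lambda,\xi)$ is equivalent to $\sum_{i=1}^n R_i(A)=0$ for all bounded Borel $A\subset\R^{dn}$ and all $s,t_1,\dots,t_n$. The goal is to prove $R_i(A)=0$ for each $i$ separately, which by \eqref{eq:stat} gives the stationarity of $(p_i(x)e^{-\langle\lambda_i,x\rangle},\xi)$. Replace $A$ by $A+v$ for an arbitrary $v\in\R^d$, substitute $u=x-v$ in each integral, and expand via the multi-index Taylor formula $p_i(u+v)=\sum_{\beta}\tfrac{v^\beta}{\beta!}(\partial^\beta p_i)(u)$. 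This produces
\begin{displaymath}
R_i(A+v)=e^{-\langle\lambda_i,v\rangle}Q_i(v),\qquad
Q_i(v)=\sum_{\beta}\frac{v^\beta}{\beta!}\int_{\R^d}\bigl(P_{t_1,\dots,t_n}(A-u)-P_{t_1+s,\dots,t_n+s}(A-u)\bigr)(\partial^\beta p_i)(u)e^{-\langle\lambda_i,u\rangle}du,
\end{displaymath}
so $Q_i$ is a polynomial in $v$ (a finite sum since $p_i$ has finite degree). The stationarity hypothesis therefore reads
\begin{displaymath}
\sum_{i=1}^n e^{-\langle\lambda_i,v\rangle}Q_i(v)=0\qquad\text{for all }v\in\R^d.
\end{displaymath}

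The heart of the argument is the claim that this identity forces each $Q_i$ to vanish identically. I would reduce to the one-dimensional case by restricting to a line $v=tw$ for $w\in\R^d$: the identity becomes $\sum_i e^{-t\langle\lambda_i,w\rangle}\tilde Q_i(t)=0$ where $\tilde Q_i(t)=Q_i(tw)$ is a univariate polynomial. For $w$ outside the finite union of hyperplanes $\{\langle\lambda_i-\lambda_j,w\rangle=0\}_{i\neq j}$ the scalars $\langle\lambda_i,w\rangle$ are pairwise distinct, and the classical linear independence of univariate exponential polynomials with distinct exponents (proved, for instance, by successive differentiation and multiplication by a dominant exponential, or by inspecting Wronskians) forces $\tilde Q_i\equiv 0$. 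Varying $w$ over such a dense set shows $Q_i$ vanishes on a set of uniqueness for polynomials, hence $Q_i\equiv 0$.

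Setting $v=0$ gives $R_i(A)=Q_i(0)=0$ for every bounded Borel $A$, which is exactly the stationarity condition \eqref{eq:stat} for the particle system with intensity $p_i(x)e^{-\langle\lambda_i,x\rangle}$. The only subtlety I foresee is the linear-independence step, since one must ensure the coefficient polynomials really are polynomials (not distributions or something worse): this is guaranteed because each $p_i$ has finite degree and the translation expansion terminates. Everything else is bookkeeping.
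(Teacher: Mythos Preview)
Your argument is correct and is in fact cleaner than the paper's own proof. Both proofs exploit translations $A\mapsto A+v$, but they use them differently. The paper mimics the proof of Theorem~\ref{lemma:mixture}: split the finite set $\{\lambda_1,\dots,\lambda_n\}$ by a hyperplane $\{\langle\lambda,v\rangle=1\}$ into $E_1$ and $E_2$, observe that for large $h$ the exponential factor dominates the polynomial so that the two sides of the analogue of~\eqref{eq:convcontra} behave asymptotically like pure exponentials, and derive a contradiction. This is an inequality/asymptotic argument and has to be iterated.

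You instead push the translation through algebraically: the identity $R_i(A+v)=e^{-\langle\lambda_i,v\rangle}Q_i(v)$ (which is exactly the computation used in the proof of Theorem~\ref{thm:polynomial}) turns the stationarity hypothesis into a linear relation among exponential polynomials in $v$, and then the classical linear independence of $\{v^\beta e^{-\langle\lambda_i,v\rangle}\}$ for distinct $\lambda_i$ kills each term at once. This avoids the iterative half-space cutting and the ``exponential beats polynomial'' estimate entirely. One small point worth making explicit: you should note that the $\lambda_i$ may be assumed pairwise distinct without loss of generality (otherwise merge the corresponding polynomials), since your linear-independence step needs this and the statement does not say it.
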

\begin{proof}
  As in the proof of Theorem~\ref{lemma:mixture}, define $E_1$ and
  $E_2$, where now both these sets are finite. Since the exponential
  grows faster than polynomial, for each bounded Borel $A$ and
  sufficiently large $h$,
  \begin{align*}
    \Lambda_1*(P_{t_1,\dots,t_n}-P_{t_1+s,\dots,t_n+s})(A+hv) &>
    e^{-1}\Lambda_1*(P_{t_1,\dots,t_n}-P_{t_1+s,\dots,t_n+s})(A) \,,\\
    \Lambda_2*(P_{t_1+s,\dots,t_n+s}-P_{t_1,\dots,t_n})(A+hv) &\leq
    e^{-1}\Lambda_2*(P_{t_1+s,\dots,t_n+s}-P_{t_1,\dots,t_n})(A) \,,
  \end{align*}
  which eventually leads to a contradiction as in the proof of
  Theorem~\ref{lemma:mixture}.
\end{proof}

\begin{theorem}
  \label{thm:par-mu}
  Assume that $\E e^{\langle u, \xi(t) \rangle} < \infty$ for all
  $t\in\R$ and all $u$ from an open neighbourhood of $\lambda$. Then
  the process $\xi$ and the measure with exponential polynomial
  density $p(x)e^{-\langle\lambda,x\rangle}$ generate a stationary
  particle system if and only if
  \begin{multline}
    \label{eq:par-mu}
    q\Big(\frac{\partial}{\partial x}\Big)
    \varphi_{t_1,\dots,t_n}\Big(-\imath x
    -\sum_{i=2}^nu_i,u_2,\dots,u_n\Big)\Big\rvert_{x=\lambda}\\
    =q\Big(\frac{\partial}{\partial x}\Big)
    \varphi_{t_1+s,\dots,t_n+s}\Big(-\imath x
    -\sum_{i=2}^nu_i,u_2,\dots,u_n\Big)\Big\rvert_{x=\lambda} \,,
  \end{multline}
  for all partial derivatives $q$ of $p$, all $n\geq1$,
  $s,t_1,\dots,t_n\in\R$ and $u_1,\dots,u_n\in\R^d$ with
  $\sum_{i=1}^nu_i=0$.
\end{theorem}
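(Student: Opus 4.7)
The plan is to lift the Fourier approach of Theorem~\ref{thm:Laplace1e} to exponential-polynomial densities by viewing $p(x)e^{-\langle\lambda,x\rangle}$ as a polynomial differential operator in a parameter $y$ applied to a pure exponential. Since $x^\alpha e^{-\langle\lambda,x\rangle}=(-\partial_y)^\alpha e^{-\langle y,x\rangle}\big|_{y=\lambda}$, the intensity measure obeys
$$\Lambda_{t_1,\dots,t_n}(A)=p(-\partial_y)\,\Lambda^{(y)}_{t_1,\dots,t_n}(A)\big|_{y=\lambda}\,,$$
where $\Lambda^{(y)}$ is the intensity measure of the exponential system $(\efrak_y,\xi)$ treated in Theorem~\ref{thm:Laplace1e}. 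The moment hypothesis makes $y\mapsto \Lambda^{(y)}_{t_1,\dots,t_n}(A)$ analytic on a neighbourhood $U\ni\lambda$, which licenses all forthcoming interchanges of differentiation, integration and Fourier transform.

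Next, I would invoke Theorem~\ref{thm:polynomial} to replace stationarity of $(p\,e^{-\langle\lambda,\cdot\rangle},\xi)$ by simultaneous stationarity of $(q\,e^{-\langle\lambda,\cdot\rangle},\xi)$ for every partial derivative $q$ of $p$, including $p$ itself. It therefore suffices to show that for a single polynomial $q$, the identity (\ref{eq:par-mu}) for $q$ is the Fourier transcription of stationarity of $(q\,e^{-\langle\lambda,\cdot\rangle},\xi)$.

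For this, I would combine the disintegration $\Lambda^{(y)}_{t_1,\dots,t_n}(A)=\int \mu^{(y)}_{t_1,\dots,t_n}(A-z)\,e^{-\langle y,z\rangle}\,dz$ from Theorem~\ref{thm:Laplace1e} with the operator commutation
$$q(-\partial_y)\bigl[e^{-\langle y,z\rangle}f(y)\bigr]=e^{-\langle y,z\rangle}\,q(z-\partial_y)\,f(y)\,,$$
which is obtained by iterating $-\partial_{y_j}e^{-\langle y,z\rangle}=e^{-\langle y,z\rangle}(z_j-\partial_{y_j})$. Expanding $q(z-\partial_y)=\sum_\alpha \tfrac{z^\alpha}{\alpha!}\,q^{(\alpha)}(-\partial_y)$ (Taylor of $q$ in $z$) and writing $\Delta$ for the time-shift difference $(t_i)\mapsto(t_i+s)$, this yields
$$\Delta\Lambda_{t_1,\dots,t_n}(A)=\sum_\alpha\tfrac{1}{\alpha!}\int z^\alpha e^{-\langle\lambda,z\rangle}\,\tilde\mu^{(q,\alpha)}(A-z)\,dz,$$
with $\tilde\mu^{(q,\alpha)}=\bigl[q^{(\alpha)}(-\partial_y)\,\Delta\mu^{(y)}\bigr]\big|_{y=\lambda}$, a finite signed measure supported on $\{x_1=0\}$. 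Uniqueness of the polynomial-weighted diagonal disintegration forces $\tilde\mu^{(q,\alpha)}\equiv 0$ for every $\alpha$, and Fourier inversion together with the identity $\hat\mu^{(y)}(u)=\varphi_{t_1,\dots,t_n}(-\imath y-\sum_{i=2}^n u_i,u_2,\dots,u_n)$ from the proof of Theorem~\ref{thm:Laplace1e} rewrites each such vanishing as (\ref{eq:par-mu}) for the polynomial $q^{(\alpha)}$. As $q$ ranges over partial derivatives of $p$ and $\alpha$ over multi-indices, $q^{(\alpha)}$ exhausts the family of partial derivatives of $p$ named in the theorem.

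The main obstacle will be extracting the component identities $\tilde\mu^{(q,\alpha)}=0$ from the vanishing of their polynomial-weighted superposition. I would handle this via the linear independence of the densities $\{z^\alpha e^{-\langle\lambda,z\rangle}\}_\alpha$ on the diagonal combined with the disintegration uniqueness of \cite[Th.~15.3.3]{kalle83}; equivalently, one can read off the components from the Taylor coefficients, in $y$ at $y=\lambda$, of the analytic map $y\mapsto\Delta\Lambda^{(y)}_{t_1,\dots,t_n}(A)$, and this is precisely where the finiteness of $\E e^{\langle u,\xi(t)\rangle}$ on the full neighbourhood $U$ becomes essential.
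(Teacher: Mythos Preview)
Your proposal is correct and follows essentially the same route as the paper's proof. Both arguments expand $\Lambda_{t_1,\dots,t_n}(A)$ by substituting $z=x+\xi(t_1)$, Taylor-expand the polynomial factor to separate the $z$-dependence (your commutation identity $q(-\partial_y)[e^{-\langle y,z\rangle}f(y)]=e^{-\langle y,z\rangle}q(z-\partial_y)f(y)$ is exactly this Taylor expansion in operator form), recognise the $\xi(t_1)^\beta e^{\langle x,\xi(t_1)\rangle}$ factors as $\partial_x^\beta\mu^{(x)}_{t_1,\dots,t_n}$, extract the individual conditions by uniqueness of the polynomial-weighted diagonal disintegration, and then Fourier-transform. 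The paper packages the uniqueness step as an appeal to Theorems~\ref{thm:Laplace1e} and~\ref{thm:polynomial}; you spell it out via linear independence of $\{z^\alpha e^{-\langle\lambda,z\rangle}\}$ and \cite[Th.~15.3.3]{kalle83}, which is the same content. One small expositional point: your preliminary invocation of Theorem~\ref{thm:polynomial} is in fact redundant, since applying your decomposition directly with $q=p$ already yields the full equivalence (your computation shows stationarity of $(q\,e^{-\langle\lambda,\cdot\rangle},\xi)$ is equivalent to \eqref{eq:par-mu} for \emph{all} derivatives $q^{(\alpha)}$, not just for $q$ itself, so the intermediate reduction to ``a single polynomial $q$'' is unnecessary).
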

\begin{proof}
  Denote shortly
  $\Delta\xi=(\xi(t_2)-\xi(t_1),\dots,\xi(t_n)-\xi(t_1))$. Similarly
  to (\ref{eq:mu-disint}), for a bounded Borel $A$,
  \begin{align*}
    \Lambda_{t_1,\dots,t_n} &(A) =
    \int_{R^d}\E \Big[ \mathbf{1}_{A-z}(0,\Delta \xi)
    e^{\langle\lambda,\xi(t_1)\rangle}p(z-\xi(t_1)) \Big]
    e^{-\langle\lambda,z \rangle}dz \nonumber \\
    &=\sum_{\beta \ge 0}\frac{1}{\beta!}(-1)^{\abs{\beta}}
    \int_{R^d}\E\Big[\mathbf{1}_{A-z}(0,\Delta\xi)\xi(t_1)^\beta
    e^{\langle x,\xi(t_1)\rangle}\Big]_{x=\lambda}q_\beta(z)
    e^{-\langle\lambda,z \rangle}dz \nonumber \\
    &= \sum_{\beta \ge 0}\frac{1}{\beta!}(-1)^{\abs{\beta}}
    \int_{R^d} \frac{\partial^{\abs{\beta}}}{\partial x^\beta}
    \mu_{t_1,\dots,t_n}(A-z)|_{x=\lambda} q_\beta(z)
    e^{-\langle\lambda,z \rangle}dz \,,
  \end{align*}
  where $q_\beta$ denotes the $\beta$'th partial derivative of $p$. By
  Theorems~\ref{thm:Laplace1e} and~\ref{thm:polynomial} the value of
  $\Lambda_{t_1,\dots,t_n}(A)$ is invariant for time shifts if and
  only if all the partial derivatives of $\mu$ are invariant. Taking
  the Fourier transform yields the claim.
\end{proof}

Now assume that $\xi$ is Gaussian. If $\xi$ and $\Lambda$ with density
$p(x)e^{-\langle\lambda,x\rangle}$ generate a stationary particle system,
then $\xi$ and $\efrak_\lambda$ also do, so that $\xi$ is described by
Theorem~\ref{thm:Gauss1e}. 

\begin{example}
  \label{ex:neg-poly}
  While in the univariate case Gaussian systems with a positive
  exponential polynomial density do not exist unless the polynomial
  part is constant, the convolution equation can be satisfied with a
  signed measure $\Lambda$.  For instance, one-dimensional signed
  measure on $\R$ with density $x^{2k+1}$ and the two-sided Brownian
  motion form stationary particle system for each $k\geq1$.
\end{example}

\subsection{Exponential measures on subspaces}
\label{sec:expon-meas-subsp}

Now assume that $\Lambda$ is supported by a linear subspace $\HH$ of
$\R^d$.  Denote by $\efrak_\lambda^\HH$ the measure on $\HH$ with
density $e^{-\langle \lambda,x\rangle}$, $x\in\HH$. The corresponding
Poisson point process is then a subset of $\HH$. Without loss of
generality, it is possible to assume that $\lambda\in\HH$ and
otherwise consider its orthogonal projection on $\HH$, which results
in the same density. Denote by $\xi^\HH(t)$ the orthogonal projection
of $\xi(t)$ onto $\HH$ and let $\xi^\perp=\xi-\xi^\HH$.

\begin{theorem}
  \label{thr:subspace}
  Assume that \eqref{eq:intexp} holds.  The process $\xi$ and
  $\efrak_\lambda^\HH$ generate a stationary particle system if and
  only if \eqref{eq:Fourier} holds for all $n\geq1$,
  $s,t_1,\dots,t_n\in\R$ and $u_1,\dots,u_n \in \R^d$ such that
  $\sum_{i=1}^n u_i$ is orthogonal to $\HH$.
\end{theorem}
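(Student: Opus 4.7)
The plan is to adapt the proof of Theorem~\ref{thm:Laplace1e} to a measure living on the subspace $\HH$. Without loss of generality assume $\lambda\in\HH$. For a bounded Borel set $A\subset\R^{dn}$, I would first write
\begin{displaymath}
  \Lambda_{t_1,\dots,t_n}(A)
  = \E\int_{\HH}\mathbf{1}_{(\xi(t_1)+x,\dots,\xi(t_n)+x)\in A}\, e^{-\langle\lambda,x\rangle}\, dx,
\end{displaymath}
where $dx$ is Lebesgue measure on $\HH$, and then substitute $z=x+\xi^\HH(t_1)\in\HH$. Using $\langle\lambda,\xi^\HH(t_1)\rangle=\langle\lambda,\xi(t_1)\rangle$ (which is valid since $\lambda\in\HH$), this yields the disintegration
\begin{displaymath}
  \Lambda_{t_1,\dots,t_n}(A) = \int_{\HH}\mu_{t_1,\dots,t_n}(A-\Delta(z))\, e^{-\langle\lambda,z\rangle}\, dz,
\end{displaymath}
with $\Delta(z)=(z,\dots,z)\in\R^{dn}$ and
\begin{displaymath}
  \mu_{t_1,\dots,t_n}(B) = \E\bigl[\mathbf{1}_{B}(\xi^\perp(t_1),\xi(t_2)-\xi^\HH(t_1),\dots,\xi(t_n)-\xi^\HH(t_1))\, e^{\langle\lambda,\xi(t_1)\rangle}\bigr].
\end{displaymath}

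Next, I would observe that $\mu_{t_1,\dots,t_n}$ is concentrated on the subspace $\HH^\perp\times(\R^d)^{n-1}$, and that this subspace is linearly complementary to the diagonal $\{\Delta(z):z\in\HH\}$: a common element has its first coordinate simultaneously in $\HH$ and $\HH^\perp$, hence must vanish, while the two dimensions sum to $dn$. This lets me apply \cite[Th.~15.3.3]{kalle83} exactly as in the proof of Theorem~\ref{thm:Laplace1e} to conclude that the stationarity identity $\Lambda_{t_1,\dots,t_n}=\Lambda_{t_1+s,\dots,t_n+s}$ is equivalent to $\mu_{t_1,\dots,t_n}=\mu_{t_1+s,\dots,t_n+s}$.

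Finally, a direct computation gives
\begin{displaymath}
  \hat{\mu}_{t_1,\dots,t_n}(u_1,\dots,u_n)
  = \varphi_{t_1,\dots,t_n}\Bigl(u_1^\perp-\Bigl(\sum_{k=2}^n u_k\Bigr)^\HH-\imath\lambda,\, u_2,\dots,u_n\Bigr),
\end{displaymath}
which depends on $u_1$ only through $u_1^\perp$. As $u_1$ ranges over $\R^d$, the first argument $v_1:=u_1^\perp-(\sum_{k=2}^n u_k)^\HH$ traces out exactly the set of $v_1\in\R^d$ with $v_1^\HH=-(\sum_{k=2}^n u_k)^\HH$, equivalently $v_1+u_2+\dots+u_n\in\HH^\perp$. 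Renaming $v_1$ as $u_1$ converts equality of Fourier transforms across a time shift into \eqref{eq:Fourier} restricted precisely to tuples with $\sum_{i=1}^n u_i\in\HH^\perp$, as claimed.

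The only genuinely new step is the uniqueness of the disintegration: the auxiliary measure $\mu$ now lives on a subspace of codimension $\dim\HH$, so one must verify the complementary-subspace geometry carefully before invoking \cite[Th.~15.3.3]{kalle83}; the Fourier computation and the reparametrisation are then a direct transcription of the argument used in Theorem~\ref{thm:Laplace1e}.
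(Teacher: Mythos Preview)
Your proposal is correct and follows essentially the same approach as the paper: the paper's proof consists of the single remark that one adapts Theorem~\ref{thm:Laplace1e} using
\[
  \mu(A)=\E\bigl[\one_{(\xi(t_1)-\xi^\HH(t_1),\dots,\xi(t_n)-\xi^\HH(t_1))\in A}\, e^{\langle \lambda,\xi^\HH(t_1)\rangle}\bigr],
\]
which is exactly your $\mu_{t_1,\dots,t_n}$ (since $\xi(t_1)-\xi^\HH(t_1)=\xi^\perp(t_1)$ and $\langle\lambda,\xi^\HH(t_1)\rangle=\langle\lambda,\xi(t_1)\rangle$). You have simply spelled out the disintegration, the complementary-subspace check needed for \cite[Th.~15.3.3]{kalle83}, and the Fourier reparametrisation that the paper leaves implicit.
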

\begin{proof}
  The proof is similar to the proof of Theorem~\ref{thm:Laplace1e}
  with 
  \begin{displaymath}
    \mu(A)=\E\big[\one_{(\xi(t_1)-\xi^\HH(t_1),\dots,
        \xi(t_n)-\xi^\HH(t_1))\in A}
      e^{\langle \lambda,\xi^\HH(t_1)\rangle}\big]\,.
  \end{displaymath}
\end{proof}

The following theorem concerns the Gaussian case. Let $m^\HH$,
$m^\perp$ be the expectations of $\xi^\HH$, $\xi^\perp$. Furthermore,
let $\Sigma^\HH(t_1,t_2)$ (respectively $\Sigma^\perp(t_1,t_2)$ and
$C(t_1,t_2)$) be the covariance matrix of $\xi^\HH(t_1)$ and
$\xi^\HH(t_2)$ (respectively of $\xi^\perp(t_1)$ and $\xi^\perp(t_2)$
and of $\xi^\HH(t_1)$ and $\xi^\perp(t_2)$). Finally,
$\Gamma^\HH(t_1,t_2)$ denotes the variogram of $\xi^\HH$.

\begin{theorem}
  \label{thr:subspace-gauss2}
  A Gaussian stochastic process $\xi$ and measure $\efrak_\lambda^\HH$ 
  generate a stationary particle system if and only if $\xi$ satisfies 
  the following conditions.
  \begin{itemize}
  \item[(i)] $\xi^\HH$ has representation \eqref{eq:Gauss1e} described
    in Theorem~\ref{thm:Gauss1e}.
  \item[(ii)] $\xi^\perp- m^\perp$ is stationary. 
  \item[(iii)]  $C(t_1,t_1)-C(t_2,t_1)=C(t_1+s,t_1+s)-C(t_2+s,t_1+s)$ \\ for all $s,t_1,t_2 \in \R$. 
  \item[(iv)] $m^\perp(t_2)+C(t_1,t_2)^\top\lambda=m^\perp(t_2+s)+C(t_1+s ,t_2+s)^\top\lambda$ \\ for all $s,t_1,t_2 \in \R$. 
  \end{itemize}
\end{theorem}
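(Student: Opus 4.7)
The plan is to invoke Theorem~\ref{thr:subspace}, which recasts stationarity of $(\efrak_\lambda^\HH,\xi)$ as shift-invariance of $\varphi_{t_1,\dots,t_n}(u_1-\imath\lambda,u_2,\dots,u_n)$ over all $u_1,\dots,u_n\in\R^d$ with $\sum_i u_i$ orthogonal to $\HH$. First I would decompose $u_i=u_i^\HH+u_i^\perp$ along $\HH\oplus\HH^\perp$: the constraint becomes $\sum_i u_i^\HH=0$, whereas the $u_i^\perp\in\HH^\perp$ remain free. Since $\lambda\in\HH$, the pairings separate cleanly as $\langle u_i,\xi(t_i)\rangle=\langle u_i^\HH,\xi^\HH(t_i)\rangle+\langle u_i^\perp,\xi^\perp(t_i)\rangle$ and $\langle \lambda,\xi(t_1)\rangle=\langle \lambda,\xi^\HH(t_1)\rangle$.

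Because $(\xi^\HH(t_1),\dots,\xi^\HH(t_n),\xi^\perp(t_1),\dots,\xi^\perp(t_n))$ is jointly Gaussian, the logarithm of $\varphi_{t_1,\dots,t_n}(u_1-\imath\lambda,u_2,\dots,u_n)$ is an explicit polynomial in $(u_i^\HH,u_i^\perp)$ whose coefficients are built from $m^\HH$, $m^\perp$, $\Sigma^\HH$, $\Sigma^\perp$, $C$ and $\lambda$. Shift-invariance for all admissible inputs then reduces to shift-invariance of every coefficient in this expansion; an invariant way to package the bookkeeping is to apply Lemma~\ref{lemma:lt} to the ambient Gaussian vector on $\R^{2dn}$ with the affine subspace determined by the constraint $\sum u_i^\HH=0$ and the shift direction associated with $\lambda$ on the first block.

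For sufficiency I would directly check, term by term, that (i)--(iv) make each piece shift-invariant. The pure $\lambda$-contribution $\langle\lambda,m^\HH(t_1)\rangle+\tfrac12\lambda^\top\Sigma^\HH(t_1,t_1)\lambda$ and the $u^\HH$-linear and $u^\HH$-quadratic pieces are handled by~(i) through Theorem~\ref{thm:Gauss1e}; the $u^\perp$-quadratic part $\sum_{i,j}(u_i^\perp)^\top\Sigma^\perp(t_i,t_j)u_j^\perp$ is taken care of by~(ii); the $u^\perp$-linear part, which after the imaginary shift becomes $\imath\sum_i\langle u_i^\perp, m^\perp(t_i)+C(t_1,t_i)^\top\lambda\rangle$, is exactly~(iv); and the bilinear cross term $\sum_{i,j}(u_i^\HH)^\top C(t_i,t_j)u_j^\perp$, after elimination of $u_1^\HH$ via $\sum u_i^\HH=0$, produces a sum whose coefficients are the differences $C(t_i,t_j)-C(t_1,t_j)$, and shift-invariance of these follows from (iii) together with (ii) and (iv).

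For necessity I would run the same dissection in reverse, extracting one condition at a time by specialising $(u^\HH,u^\perp)$. Setting $u^\perp\equiv 0$ and letting $u^\HH$ vary with $\sum u_i^\HH=0$ reduces the condition to stationarity of the particle system $(\efrak_\lambda^\HH,\xi^\HH)$ inside $\HH$, yielding~(i) via Theorem~\ref{thm:Gauss1e}. Setting $u^\HH\equiv 0$ and letting $u_i^\perp\in\HH^\perp$ vary, the $u^\perp$-quadratic coefficient forces~(ii) while the $u^\perp$-linear coefficient (which acquires the $\lambda$-$C$ contribution from the imaginary shift) forces~(iv). Finally, taking $u_1^\HH=-u^\HH$, $u_2^\HH=u^\HH$ and a single nonzero $u_j^\perp$ isolates the bilinear coefficient and delivers~(iii). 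The main obstacle is bookkeeping: tracking the many pieces of the Gaussian exponent along with the real/imaginary character of the coefficients introduced by the shift by $\lambda$, and verifying that the diagonal form of~(iii) combined with (i), (ii), (iv) suffices to pin down the full shift-invariance demanded by the bilinear $u^\HH u^\perp$ term.
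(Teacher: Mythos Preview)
Your proposal is correct and follows essentially the same route as the paper: both reduce via Theorem~\ref{thr:subspace} to shift-invariance of the Gaussian Laplace/Fourier transform on the affine slice $\sum u_i\in\HH^\perp$, decompose along $\HH\oplus\HH^\perp$, and then match coefficients using Lemma~\ref{lemma:lt}. The paper carries this out concretely for $n=2$ after choosing coordinates so that $\HH$ is spanned by the first $k$ basis vectors, writing down the block matrices $A^\top\Sigma A$, $A^\top(m+\Sigma a)$ and the scalar $\langle m,a\rangle+\tfrac12\langle a,\Sigma a\rangle$ explicitly and reading off (i)--(iv); sufficiency for general $n$ is then dismissed with the remark that the same building blocks reappear. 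Your coordinate-free expansion and specialisation of $(u^\HH,u^\perp)$ is the same argument in slightly different packaging; note in particular that the bilinear coefficients $C(t_i,t_j)-C(t_1,t_j)$ for general $n$ follow from~(iii) alone via the telescoping $C(t_i,t_j)-C(t_1,t_j)=[C(t_j,t_j)-C(t_1,t_j)]-[C(t_j,t_j)-C(t_i,t_j)]$, so you need not invoke (ii) and (iv) there.
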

\begin{proof}
  By applying a linear transformation, it is easy to reduce the
  situation to the case of $\Lambda$ supported by the plane $\HH$
  spanned by the first $k<d$ basis vectors in $\R^d$. If
  $\xi(t)=(\xi^1(t),\dots,\xi^d(t))$, then $\xi^\HH=(\xi^1(t),\dots
  ,\xi^k(t),0,\dots,0)$ and
  $\xi^\perp=(0,\dots,0,\xi^{(k+1)}(t),\dots,\xi^d(t))$.  By
  Theorem~\ref{thr:subspace}, consider the Laplace transforms with
  $\lambda-\sum u_i$ being zeroes in its first $k$ coordinates.  As in
  Theorem~\ref{thm:Gauss1e}, consider the space $\LL$ that contains
  $(u_1,u_2)$ with $u_1^i+u_2^i=0$ for $i=1,\dots,k$ and
  $a=(\lambda,0)$. Then
  \begin{displaymath}
    A=
    \begin{pmatrix}
      I & -I_k \\
      -I_k & I
    \end{pmatrix}
  \end{displaymath}
  is the projection on $\LL$, where $I_k$ is the matrix with first $k$
  diagonal entries being one and otherwise zeroes. Then
  \begin{align}
    \label{eq:SigmaHH}
    A^\top\Sigma A &=
    \begin{pmatrix}
      \Gamma^\HH_{12}&C_{11}-C_{21}&-\Gamma^\HH_{12}&
      C_{12}-C_{22} \\
      C^\top_{11}-C^\top_{21}&\Sigma^\perp_{11}&C^\top_{21}
      -C^\top_{11} & \Sigma^\perp_{12}\\
      -\Gamma^\HH_{12}&C_{21}-C_{11}&\Gamma^\HH_{12}&
      C_{22}-C_{12} \\
      C^\top_{12}-C^\top_{22}&\Sigma^\perp_{21}&C^\top_{22}
      -C^\top_{12} & \Sigma^\perp_{22}\\
    \end{pmatrix}\,,\\[5pt]
    \label{eq:muHH}
    A^\top(m&+\Sigma a)=
    \begin{pmatrix}
      m^\HH_1+\Sigma^\HH_{11}\lambda-m^\HH_2-\Sigma^\HH_{21}\lambda\\
      m^\perp_1+C^\top_{11}\lambda \\
      m^\HH_2+\Sigma^\HH_{21}\lambda-m^\HH_1-\Sigma^\HH_{11}\lambda\\
      m^\perp_2+C^\top_{12}\lambda
    \end{pmatrix}\,,\\[4pt]
    \label{eq:constHH}
    \langle m,a\rangle&+\thf\langle a,\Sigma a\rangle
    =\langle m_1^\HH,\lambda \rangle+\thf \langle \lambda, \Sigma_{11}^\HH 
    \lambda \rangle\,,
  \end{align} 
  where $\Sigma_{ij}=\Sigma(t_i,t_j)$, $\Gamma_{ij}=\Gamma(t_i,t_j)$
  and $C_{ij}=C(t_i,t_j)$.  The invariance of $\Gamma_{ij}^\HH$,
  the first row of (\ref{eq:muHH}) and (\ref{eq:constHH}) imply the
  representation of $\xi^\HH$. The invariance of $\Sigma^\perp_{ij}$
  in (\ref{eq:SigmaHH}) yields the stationarity of
  $\xi^\perp-m^\perp$. The remaining entries of \eqref{eq:SigmaHH} are all of the form $\pm(C(t_1,t_1)-C(t_2,t_1))$ for $t_1,t_2\in \R$, which leads to condition (iii). 
Finally (iv) is obtained by considering the second and
fourth rows of (\ref{eq:muHH}).
  
  For the sufficiency note that the Laplace transform of the random
  vector $(\xi(t_1),\dots,\xi(t_n))$ at the point
  \begin{displaymath}
  \left(\begin{pmatrix} \lambda -\sum_{i=2}^nu_i^\HH\\u_1^\perp\end{pmatrix}, 
  \begin{pmatrix} u_2^\HH\\u_2^\perp  \end{pmatrix}, \dots,
  \begin{pmatrix} u_n^\HH\\u_n^\perp  \end{pmatrix} \right)
  \end{displaymath}
  consists of a combination of similar elements as given by
  (\ref{eq:SigmaHH}), (\ref{eq:muHH}) and (\ref{eq:constHH}), where
  $u_i^\HH$ and $u_i^\perp$ denote orthogonal projection of $u_i$ on
  $\HH$ and its orthogonal complement.
\end{proof}

The following example shows that there exist stationary systems
generated by a process $\xi$ with non-stationary $\xi^\perp$.

\begin{example}
  \label{ex:nonzeroB}
  Let $\HH=\R\times\{0\} \subset \R^2$ and consider
  $\xi=(\xi^1,\xi^2)$ with $\xi^1(t)=Zt-\thf\lambda t^2$ and
  $\xi^2(t)=Z-\lambda t$ for the standard Gaussian variable $Z$ and
  $\lambda\in\R$. By Theorem~\ref{thr:subspace-gauss2}, $\xi$ and
  $\efrak_{(\lambda,0)}^\HH$ form a stationary particle system.
\end{example}

\section{Multivariate Brown--Resnick processes}
\label{sec:MBR}

Consider a special case of the particle system that appears if the
Poisson process $\Pi$ lives on the diagonal line
$\HH=\{x^1=\cdots=x^d\}$ in $\R^d$. In this case, instead of the
additive particle system it is convenient to consider the
\emph{multiplicative} particle system
\begin{equation}
  \label{eq:Ne}
  N^e(t)=\{y_ie^{\xi_i(t)}, i \geq1\}\,,\qquad t\in\R\,,
\end{equation}
where $\{y_i:\; i\geq 1\}=\Pi^e$ is a Poisson process on $(0,\infty)$
with intensity measure $\Lambda^e$ and independent of i.i.d. copies
$\{\xi_n,\; n\geq1\}$ of an $\R^d$-valued stochastic process $\xi(t)$
satisfying
\begin{equation}
  \label{eq:integrability}
  \E e^{\xi(t)} < \infty \quad \text{for all } t \in \R \,.
\end{equation}
Note that the exponential is applied coordinatewisely and the
finiteness of expectation means that all its coordinates are
finite. Then the intensity measure of $N^e(t_1,\dots,t_n)$ is locally
finite and given by
\begin{displaymath}
  \Lambda^e_{t_1,\dots,t_n}(A)
  =\int_{(0,\infty)} 
  \Prob{(e^{\xi(t_1)},\dots,e^{\xi(t_n)}) \in y^{-1}A} \Lambda^e(dy)
\end{displaymath}
for all Borel $A \subset\R^{dn}$.

Assume that $\Pi^e$ has intensity measure $\Lambda^e(dy)=y^{-2}dy$,
$y>0$, and define a process $\eta$ with values in $\R^d$ by
\begin{equation}
  \label{eq:eta}
  \eta(t)=\bigvee_{i=1}^\infty y_ie^{\xi_i(t)}\,,  \quad t\in\R\,,
\end{equation}
where the maximum is taken coordinatewisely. It is well known that the
process $\eta$ is max-stable with unit Fr\'echet margins, see
\cite{fal:hus:rei04}. In order to determine the finite-dimensional
distributions of $\eta$ note that the event $\{\eta(t_1)\le
z_1,\dots,\eta(t_n) \le z_n\}$ (with coordinatewise inequalities) is
equivalent to the fact that no point of the process $N^e$ defined by
\eqref{eq:Ne} lies outside $A=(0,z_1]\times\cdots\times(0,z_n]$. The
latter probability equals
$\exp\{-\Lambda^e_{t_1,\dots,t_n}((0,\infty)^{dn}\setminus A)\}$, so
that 
\begin{equation}
  \label{eq:fidi}
  \Prob{\eta(t_1)\le z_1,\dots,\eta(t_n) \le z_n}
  =\exp\left\{-\E \max_{j,k}\left(e^{\xi^k(t_j)}/z_j^k\right)\right\}
\end{equation}
for all $t_1,\dots,t_n \in \R$ and $z_1,\dots,z_n \in \R^d$.
Applying this for $n=1$, it is easily seen that condition
(\ref{eq:integrability}) ensures that $\eta(t)$ is a.s. finite for
all $t\in \R$. Furthermore, the above argument shows that the
finite-dimensional distributions of $\eta$ uniquely determine the
finite-dimensional distributions of $N^e$.
In particular, $N^e$ is stationary if and only if $\eta$ is
stationary. The following definition appears in \cite{kab:sch:haan09},
however only for stochastic processes with values in the real line.

\begin{definition}[see \cite{kab:sch:haan09}]
  \label{def:BRstat}
  A stochastic process $\xi$ satisfying (\ref{eq:integrability}) is
  called \emph{Brown--Resnick stationary} if the process $\eta$
  defined by (\ref{eq:eta}) is stationary.
\end{definition}

\begin{theorem}
  \label{thm:BRLaplace}
  A stochastic process $\xi(t)$, $t \in \R$, is Brown--Resnick
  stationary if and only if
  \begin{displaymath}
    \varphi_{t_1,\dots,t_n}(u_1-\imath d^{-1}\mathbf{1},u_2,\dots,u_n)=
    \varphi_{t_1+s,\dots,t_n+s}(u_1-\imath d^{-1}\mathbf{1},u_2,\dots,u_n)
  \end{displaymath}
  for all $s,t_1,\dots,t_n \in \R$ and $u_1,\dots,u_n \in
  \R^d$ satisfying $\sum_{i,j=1}^n u_i^j=0$, where $\mathbf{1}$ is the 
  vector with all components equal to one.
\end{theorem}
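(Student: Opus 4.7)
The plan is to reduce the problem to the additive subspace setting of Theorem~\ref{thr:subspace} by taking coordinatewise logarithms. As already observed in the paper, $\eta$ is stationary if and only if $N^e$ is stationary. Since $y_i e^{\xi_i(t)}=e^{(\log y_i)\mathbf{1}+\xi_i(t)}$ coordinatewise, $N^e$ is stationary if and only if the additive particle system
\[
  \log N^e(t)=\{(\log y_i)\mathbf{1}+\xi_i(t),\; i\geq1\}
\]
is stationary. Its underlying Poisson process $\{(\log y_i)\mathbf{1}: i\geq1\}$ is supported by the one-dimensional diagonal subspace $\HH=\{x^1=\cdots=x^d\}\subset\R^d$ spanned by $\mathbf{1}$.

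Next, I would identify the lifted intensity measure. The change of variables $y=e^z$ transforms $y^{-2}dy$ into $e^{-z}dz$ on $\R$, and the map $z\mapsto z\mathbf{1}$ transports this to $\HH$. Since $\langle d^{-1}\mathbf{1},z\mathbf{1}\rangle=z$, the resulting intensity coincides, up to the harmless positive constant $d^{-1/2}$ arising from the arc-length parametrisation of $\HH$, with $\efrak_{\lambda}^{\HH}$ for $\lambda=d^{-1}\mathbf{1}$. This overall scaling is irrelevant, since stationarity of a particle system is invariant under multiplication of $\Lambda$ by a positive constant. I would also verify the integrability hypothesis (\ref{eq:intexp}) needed in Theorem~\ref{thr:subspace}: from $d^{-1}\sum_{j=1}^d\xi^j(t)\le \max_j\xi^j(t)$ we obtain $e^{\langle d^{-1}\mathbf{1},\xi(t)\rangle}\le\sum_{j=1}^d e^{\xi^j(t)}$, so (\ref{eq:integrability}) yields $\E e^{\langle d^{-1}\mathbf{1},\xi(t)\rangle}<\infty$ for every $t\in\R$.

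Finally, I would invoke Theorem~\ref{thr:subspace} with $\lambda=d^{-1}\mathbf{1}$: the system is stationary if and only if (\ref{eq:Fourier}) holds for all $u_1,\ldots,u_n\in\R^d$ such that $\sum_{i=1}^n u_i$ is orthogonal to $\HH$. Since $\HH$ is spanned by $\mathbf{1}$, orthogonality to $\HH$ is equivalent to $\bigl\langle\sum_i u_i,\mathbf{1}\bigr\rangle=\sum_{i=1}^n\sum_{j=1}^d u_i^j=0$, which is exactly the condition in the statement. Combining the three steps closes the proof.

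The main obstacle is purely bookkeeping rather than conceptual: pinning down the correct value $\lambda=d^{-1}\mathbf{1}$ by comparing densities on the one-dimensional subspace $\HH$, handling the inconsequential arc-length normalisation, and translating the orthogonality condition from Theorem~\ref{thr:subspace} into the coordinate sum appearing in the theorem. Everything else follows from the equivalence between stationarity of $\eta$, of $N^e$, and of its componentwise logarithm.
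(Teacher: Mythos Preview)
Your proposal is correct and follows essentially the same route as the paper's proof: pass to the additive system via coordinatewise logarithms, identify the resulting Poisson process as living on the diagonal line $\HH$ with intensity $\efrak_{d^{-1}\mathbf{1}}^\HH$ (up to a constant), and apply Theorem~\ref{thr:subspace}. Your write-up is in fact more thorough than the paper's three-sentence proof, since you explicitly verify the integrability hypothesis~(\ref{eq:intexp}), pin down the arc-length constant, and spell out why orthogonality to $\HH$ is equivalent to $\sum_{i,j}u_i^j=0$.
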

\begin{proof}
  If we consider the additive system, i.e. let $x_i=\log(y_i)$ and
  $N(t)=\{x_i+\xi_i(t), i \geq1 \}$, then the Brown--Resnick
  construction corresponds to the situation where $\{x_i,i\geq1\}$ is
  a Poisson process on the line $\HH=\{(x,\dots,x):\; x\in\R\}$ in
  $\R^d$ with intensity $e^{-x}$, $x\in\R$.  Then
  the result follows from Theorem~\ref{thr:subspace}.
\end{proof} 

Since the measure $\Lambda^e$ is prescribed, the Brown--Resnick
stationary processes form a subclass of stationary particle systems
with special intensity measures supported by the
line $\HH$. 

In the following we characterise all pairs of a Gaussian process $\xi$
and a Poisson process on $\HH$ that yield stationary particle
systems. Their multiplicative variants may be regarded as
generalisations of Brown--Resnick stationary processes allowing for
general measures $\Lambda^e$. Note that $\xi^\HH$ is the vector with
all components being $\bar\xi =d^{-1}\sum_{i=1}^d\xi^i$ and
$\xi^\perp=\xi-\xi^\HH$.

\begin{theorem}
  \label{thr:gen-BR}
  A Gaussian process $\xi$, such that $\langle v,\xi\rangle$ is not
  stationary for some $v\notin\HH$, and a locally finite measure
  $\Lambda$ on the diagonal line $\HH$ in $\R^d$ generate a stationary
  particle system if and only if $\Lambda$ is proportional to
  $\efrak_\lambda^\HH$,
  \begin{displaymath}
    \bar{\xi}(t)=
    \begin{cases}
      W(t)+b(t)+c & \text{if }\; \lambda=0\,,\\
      W(t)-\thf \lambda \sigma^2(t)+c & \text{if }\;\lambda\neq0\,,
    \end{cases}    
  \end{displaymath}
  where $W(t)$, $t\in\R$, is a centred univariate Gaussian process
  with stationary increments and variance $\sigma^2(t)$, $b$ is an
  additive univariate function, $c\in\R$ is a constant, and $\xi^\HH, 
  \xi^\perp$ satisfy the conditions (ii)-(iv) of 
  Theorem~\ref{thr:subspace-gauss2}.
\end{theorem}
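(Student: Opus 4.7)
The plan is to reduce the multivariate problem to Kabluchko's univariate classification by projecting onto the one-dimensional diagonal $\HH\cong\R$, and then apply Theorem~\ref{thr:subspace-gauss2}. First I would project $N(t)$ along the averaging map $y\mapsto d^{-1}\sum_j y^j$, obtaining a stationary univariate Gaussian system on $\R$ generated by $\Lambda_0$ (the measure $\Lambda$ viewed on $\R$) and the scalar process $\bar\xi$. Kabluchko's theorem then forces $(\Lambda_0,\bar\xi)$ into one of the three classes recalled in the introduction; in classes (ii) and (iii) one immediately obtains $\Lambda=c\efrak_\lambda^\HH$ (with $\lambda=0$ or $\lambda\ne0$) and $\bar\xi$ of the stated form.

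The crux is ruling out the residual class (i) when $\Lambda_0$ is not proportional to $e^{-\lambda x}$ for any $\lambda$. In this case $\bar\xi$ is stationary but $\Lambda_0$ is a priori arbitrary, and I would derive a contradiction from the hypothesis on $\langle v,\xi\rangle$. Fourier-transforming the $n$-point identity $\Lambda * P_{t_1,\dots,t_n}=\Lambda * P_{t_1+s,\dots,t_n+s}$ on $\R^{dn}$ yields
\[
\hat\Lambda_0\Bigl({\textstyle\sum_{i,j}}u_i^j\Bigr)\bigl[\varphi_{t_1,\dots,t_n}(u_1,\dots,u_n)-\varphi_{t_1+s,\dots,t_n+s}(u_1,\dots,u_n)\bigr]=0.
\]
When $\hat\Lambda_0$ is non-vanishing on an open subset of $\R$ (true, e.g., for any finite $\Lambda_0$, by continuity and positivity at the origin), analyticity of the Gaussian $\varphi$ extends the equality to all of $\R^{dn}$ and $\xi$ is stationary. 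When $\hat\Lambda_0$ is distributional and supported only at the origin, the structure theorem for distributions forces $\Lambda_0=p(x)\,dx$ for a nonconstant positive polynomial $p$, and I would work at level $n=2$, reading off the required vanishing of the Gaussian transverse derivatives on the hyperplane $\{\sum_{i,j}u_i^j=0\}$; this, combined with the $n=1$ time-invariance of $m(t)$ and $\Sigma(t,t)$, forces $\Sigma(t_1,t_2)=\Sigma(t_1+s,t_2+s)$ and hence stationarity of $\xi$, contradicting the hypothesis.

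Once $\Lambda=c\efrak_\lambda^\HH$ is established (with $c>0$ immaterial for stationarity), I would apply Theorem~\ref{thr:subspace-gauss2} to $(\efrak_\lambda^\HH,\xi)$: its condition (i) on $\xi^\HH=\bar\xi\mathbf{1}$ reduces in the one-dimensional case of $\HH$, via Theorem~\ref{thm:Gauss1e} and Remark~\ref{remark:lambda0}, to the stated form of $\bar\xi$, while conditions (ii)-(iv) transfer verbatim. Sufficiency then follows by reading Theorem~\ref{thr:subspace-gauss2} in the forward direction. The main obstacle will be the Fourier step that excludes non-exponential $\Lambda_0$ in the borderline polynomial-Lebesgue subcase, which requires careful bookkeeping of the Gaussian exponential's transverse derivatives on the hyperplane $\{\sum u_i^j=0\}$ at $n=2$ to conclude full time-invariance of $\Sigma(t_1,t_2)$.
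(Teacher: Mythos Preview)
Your reduction to Theorem~\ref{thr:subspace-gauss2} once $\Lambda$ is known to be exponential on $\HH$ is correct and matches the paper. The gap is in the step that forces $\Lambda$ to be exponential. You project onto $\HH$ via the averaging map and obtain the univariate system $(\Lambda_0,\bar\xi)$; when $\bar\xi$ happens to be stationary you land in Kabluchko's class~(i), where $\Lambda_0$ is completely unconstrained, and your Fourier argument to escape this case is incomplete. The dichotomy ``$\hat\Lambda_0$ non-vanishing on an open set'' versus ``$\hat\Lambda_0$ supported only at the origin'' does not exhaust the possibilities: a locally finite measure can have a distributional Fourier transform supported on a nontrivial discrete set (e.g.\ $\Lambda_0=\sum_{n\in\mathbb{Z}}\delta_n$ has $\hat\Lambda_0$ supported on $2\pi\mathbb{Z}$) or on more exotic closed nowhere-dense sets, and $\Lambda_0$ need not even be tempered without an extra integrability hypothesis. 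The polynomial subcase you do treat is also only sketched, and the claimed passage from transverse-derivative identities at $n=2$ to full invariance of $\Sigma(t_1,t_2)$ is not justified.

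The paper sidesteps all of this by projecting along $v$ rather than along $\HH$. Since $\Lambda$ lives on the one-dimensional line $\HH$, the linear functional $x\mapsto\langle v,x\rangle$ pushes $\Lambda$ forward to a measure on $\R$, and the resulting univariate particle system $N_v=\{\langle v,x_i+\xi_i\rangle\}$ is generated by the \emph{non-stationary} Gaussian process $\langle v,\xi\rangle$. Kabluchko's classification then immediately rules out class~(i) and forces the pushforward of $\Lambda$, hence $\Lambda$ itself, to be proportional to some $\efrak_\lambda^\HH$. (If $v$ happens to be orthogonal to $\HH$ one first replaces $v$ by $v+\varepsilon\mathbf{1}$ for small $\varepsilon$; this keeps $\langle v,\xi\rangle$ non-stationary while making the pushforward locally finite.) Choosing the projection direction to exploit the non-stationarity hypothesis directly is the missing idea.
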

\begin{proof}
  The sufficiency is easy to show.  For the necessity, note that since
  $\Lambda$ is supported by $\HH$, the projected particle system
  $N_v=\{\langle v,x_i+\xi_i\rangle, i\ge 1\}$ is also a particle
  system generated by a non-stationary Gaussian process.  The
  characterisation of univariate particle systems from \cite{kab10}
  yields that $\Lambda$ is an exponential measure. The proof is
  completed by referring to Theorem~\ref{thr:subspace-gauss2}.
\end{proof}

\begin{example}
  Consider the two-dimensional process
  \begin{displaymath}
    \xi=\begin{pmatrix}
      W+\tilde{W}-\abs{t}/2\\
      W-\tilde{W}-\abs{t}/2
    \end{pmatrix},
  \end{displaymath}
  where $W$ is the one-dimensional two-sided Brownian motion and
  $\tilde{W}$ is any one-dimensional stationary Gaussian process
  independent of $W$. Then $\bar{\xi}=W-t/2$ and
  $\xi^\perp=(\tilde{W},-\tilde{W})^\top$ satisfy the conditions in
  Theorem~\ref{thr:gen-BR} with $\lambda=1$. In this case the measure
  $\Lambda^e$ on $(0,\infty)$ has density $y^{-2}$, $y>0$, so that the
  process $\xi$ is Brown--Resnick stationary.
\end{example}

\section*{Acknowledgements}

The authors are grateful to Zakhar Kabluchko for helpful discussions
at the earlier stage of this work. Special thanks goes to Marco Oesting who pointed out an error in an earlier version of Theorem~4.16. The comments by a referee have led
to several improvements in the presentation. This work is supported by
Swiss National Science Foundation Project Nr. 200021-137527.

\newcommand{\noopsort}[1]{} \newcommand{\printfirst}[2]{#1}
  \newcommand{\singleletter}[1]{#1} \newcommand{\switchargs}[2]{#2#1}

\end{document}